\documentclass[amssymb,twoside,11pt]{article}
\thispagestyle{plain}\pagestyle{myheadings}
\markboth{\small{}}{\small{}}
\usepackage{latexsym,amsmath,graphicx}
\usepackage[dvipsnames]{xcolor}
\usepackage{placeins}
\usepackage{amsfonts}
\usepackage{enumitem}
\topmargin -0.3cm \evensidemargin 0.5cm \oddsidemargin 0.5cm
\textheight 8.8in \textwidth 6.0in
\newtheorem{theorem}{Theorem}[section] 
\newtheorem{Lemma}[theorem]{{\bf Lemma}}

\newtheorem{ex}[theorem]{{\bf Example}}
\newtheorem{definition}{Definition}[section]
\numberwithin{equation}{section}
\newenvironment{proof}{\indent{\em Proof:}}{\quad \hfill
$\Box$\vspace*{2ex}}

\font\Bbb=msbm10 at 12pt

\newcommand{\R}{\mbox{\Bbb R}}

\setlength{\parindent}{0.25in} \setlength{\parskip}{0.05in}
\begin{document}
\setcounter{page}{1}
\begin{center}
\vspace{0.4cm} {\large{\bf On the Boundary Value Problems of\\ $\Psi$-Hilfer Fractional Differential Equations}} \\
\vspace{0.5cm}
Ashwini D. Mali  $^{1}$\\
maliashwini144@gmail.com\\
\vspace{0.35cm}
Kishor D. Kucche $^{2}$ \\
kdkucche@gmail.com \\

\vspace{0.35cm}
$^{1,2}$ Department of Mathematics, Shivaji University, Kolhapur-416 004, Maharashtra, India.\\
\end{center}

\begin{abstract}
In the current paper, we derive the comparison results for the homogeneous and non-homogeneous linear initial value problem (IVP) for $\Psi$-Hilfer fractional differential equations. In the presence of upper and lower solutions, the obtained comparison results and the location of roots theorem utilized to prove the existence and uniqueness of the solution for the linear $\Psi$-Hilfer boundary value problem (BVP) through the linear non-homogeneous $\Psi$-Hilfer IVP. Assuming the existence of lower solution $w_0 $ and upper solution  $z_0 $, we establish the existence of minimal and maximal solutions for the nonlinear $\Psi$-Hilfer  BVP in the line segment $[w_0,\,z_0]$ of the weighted space  $C_{1-\,\gamma ;\, \Psi }\left( J,\,\R\right)$. Further, it demonstrated that the iterative Picard type sequences that began with lower and upper solutions respectively converges to a minimal and maximal solutions, and that started with any point on a line segment  converge to the exact solution of nonlinear $\Psi$-Hilfer BVP. Finally, an example is provided in support of the main results we acquired.
\end{abstract}
\noindent\textbf{Key words:}  $\Psi$-Hilfer fractional derivative; Boundary value problems,  Existence and uniqueness; Upper and lower solutions; Extremal solutions, Monotone iterative technique.\\
\noindent
\textbf{2010 Mathematics Subject Classification:} 26A33, 34B15, 34A12.
\def\baselinestretch{1.0}\small\normalsize
\allowdisplaybreaks
\section{Introduction}
The boundary value problems (BVPs) of fractional differential equations (FDEs) have significant applications \cite{Leung, Kaur, Constantinescu, Minhós} in mathematical physics,  financial mathematics, mathematical biology, biochemical system, biomedical engineering etc. Because of widespread applications, fractional-order BVPs were analyzed by numerous researchers for the existence of solutions by utilizing various procedures, for example, fixed point theorems  \cite{Su, Wangb, Arioua, Etemadb}, fixed point index theory \cite{Ma, Baip, Haob1, Haob2},  measure of non-compactness method  \cite{Goub1, Baitiche, Goub2}, upper and lower solution method \cite{Liub1, Zhaob, Jiab2, Long}, and so forth.

The technique of upper and lower solutions combined with the monotone iterative approach utilized effectively to derive sufficient conditions about the existence and uniqueness of solutions for the differential equations of integer order \cite{ Laddeb, Linb1, Linb2} and fractional order \cite{ Derbazi, Zhang, GWang, Akram, Shah, Linb3}	 subject to initial or boundary condition. The primary thought behind this method is to construct the monotonic sequences for the corresponding differential equations, where the initial approximations taken are upper and lower solutions.  At that point, it will be demonstrated that the sequences converge monotonically to the corresponding maximal and minimal solutions.

The fractional derivative with respect to another function is presented by  Kilbas et al. \cite{Kilbas} and Almeida \cite{Almeida} respectively in the sense of Riemann-Liouville and Caputo fractional derivative. Stimulated by the concept of  fractional derivative with respect to another function, in \cite{Vanterler1} Sousa and Oliveira  introduced  $\Psi$-Hilfer fractional derivative operator   $^H \mathcal{D}^{\alpha,\beta;\, \Psi}_{0+}(\cdot)$ and  investigated its important properties. Authors have proved that the $\Psi$-Hilfer derivative operator is more generalized and incorporates numerous fractional derivatives  as its special cases. For more details, we refer the reader to the paper \cite{Vanterler1}. For the analysis of nonlinear $\Psi$-Hilfer FDEs about existence and uniqueness of the solution, and its qualitative properties, such as Ulam-Hyers stability and various types of data dependence results, we refer the reader to the recent papers \cite{Luod, Abdoi, JVDC, Vanterler2,  Mali1, Mali2, Mali3, Mali4, Kharade}.
   
Motivated by the work mentioned above and the results obtained by  Lin et al. \cite{Linb3}, in the present paper, we discuss the existence and uniqueness results for linear and nonlinear BVPs of $\Psi$-Hilfer FDEs by the method of upper and lower solutions combined with monotone iterative technique. 

 To discuss the existence and uniqueness of solutions for BVPs of $\Psi$-Hilfer FDEs via  method of upper and lower solutions,  we need to derive the proper fractional differential
 inequalities in the setting of $\Psi$-Hilfer derivative as comparison results. Therefore, initially we obtain the comparison results for the homogeneous  linear initial value problem (IVP) for $\Psi$-Hilfer FDEs of the form
 \begin{align} \label{ln1}
 	\begin{cases}
    &^H \mathcal{D}^{\alpha,\,\beta\,;\, \Psi}_{0 +}y(t)- M y(t)
    = 0, ~t \in  (0,\,T], \\
  & \mathcal{I}_{0+}^{1-\,\gamma;\,\Psi } y(0)= 0,
  \end{cases}
    \end{align} 
and  non-homogeneous linear IVP for $\Psi$-Hilfer FDEs. Here $M>0~ (M \in \mathbb{R})$, $\Psi\in C^{1}([0,T],\mathbb{R})$ is an increasing function such that $\Psi'(t)\neq 0$, $ t\in [0,T]$,   $^H \mathcal{D}^{\alpha,\beta;\, \Psi}_{0+}(\cdot)$ is the $\Psi$-Hilfer fractional derivative of order $\alpha ~(0<\alpha<1)$ and type $\beta ~ (0\leq\beta\leq 1)$, $\gamma=\alpha+\beta\left( 1-\alpha\right)$ and $\mathcal{I}_{0+}^{1-\,\gamma;\,\Psi } (\cdot)$ is the $\Psi$-Riemann-Liouville fractional integral of order  $1-\,\gamma$.

With the help of acquired comparison results for the linear  $\Psi$-Hilfer IVP and the location of roots theorem \cite{Bartle}, assuming existence of lower and upper solution, we investigated the existence and uniqueness of the solution for  the following linear $\Psi$-Hilfer BVP
\begin{align}
   &^H \mathcal{D}^{\alpha,\,\beta\,;\, \Psi}_{0 +}y(t)- M y(t)
   = g(t),~t\in(0,\,T],  ~\label{eq513}\\
  & \mathcal{I}_{0+}^{1-\,\gamma;\,\Psi } y(0)= r\,\mathcal{I}_{0+}^{1-\,\gamma;\,\Psi } y(T),\label{eq514}
   \end{align}
 through the linear non-homogeneous  $\Psi$-Hilfer IVP,    where $g\in C_{1-\,\gamma ;\, \Psi }\left( J,\,\R\right),~J=[0,\,T]$ and $
0<r< \frac{1}{E_{\alpha, 1}\left(M (\Psi(T)-\Psi(0))^\alpha \right) }.
$
The two parameter Mittag-Leffler function $E_{\alpha, 1}(\cdot)$ and the weighted space $C_{1-\,\gamma ;\, \Psi }\left( J,\,\R\right)$ will be defined later in the preliminary section. 

Next, we consider the nonlinear $\Psi$-Hilfer BVP of the form 
 \begin{align}
   &^H \mathcal{D}^{\alpha,\,\beta\,;\, \Psi}_{0 +}y(t)- M y(t)
   = f(t, y(t)), ~t \in  (0,\,T],  ~\label{eq511}\\
  & \mathcal{I}_{0+}^{1-\,\gamma;\,\Psi } y(0)= r\,\mathcal{I}_{0+}^{1-\,\gamma;\,\Psi } y(T),\label{eq512}
   \end{align}
where  $f\left( \cdot, y(\cdot)\right) \in C_{1-\,\gamma ;\, \Psi }\left( J,\,\R\right)$ for each $y\in C_{1-\,\gamma ;\, \Psi }\left( J,\,\R\right)$.  Utilizing the existence and uniqueness results that are obtained for the linear non-homogeneous BVP of $\Psi$-Hilfer FDEs, and assuming that the nonlinear $\Psi$-Hilfer FDEs   \eqref{eq511}-\eqref{eq512} has lower solution $w_0 $ and upper solution  $z_0 $, it is proved that  there exist  minimal and maximal solutions on the line segment $[w_0,\,z_0]$ of the ordered Banach space (weighted space) $C_{1-\,\gamma ;\, \Psi }\left( J,\,\R\right)$.  Further, it is proved that the Picard iterative sequences $\{w_n\}_{n=1}^\infty$  and $\{z_n\}_{n=1}^\infty$  starting  respectively with  $w_0 $ (lower solution) and $z_0 $ (upper solution) are monotonic  in the ordered Banach space $C_{1-\,\gamma ;\, \Psi }\left( J,\,\R\right)$ and converges respectively to minimal and maximal solutions of the nonlinear BVP of $\Psi$-Hilfer  FDEs  \eqref{eq511}-\eqref{eq512}. 

Assuming that the function $f$ satisfies the one-sided Lipschitz condition, we have shown that the Picard iterative sequences beginning with any arbitrary point on the line segment  $[w_0,z_0]$ converges to the unique solution of the nonlinear $\Psi$-Hilfer  BVP  \eqref{eq511}-\eqref{eq512}. Further, the error bound between the  $n^{th} $ approximation $y_n$ and the exact solution $y^*$ of the nonlinear $\Psi$-Hilfer  BVP \eqref{eq511}-\eqref{eq512} is obtained with respect to the norm on the weighted space $C_{1-\,\gamma ;\Psi }\left( J,\,\R\right)$.  

Finally, an example is provided to illustrate the  existence and uniqueness results that we acquired through the method of lower and upper solution. 

The outcomes acquired in the present are the generalization of the results derived in \cite{Linb3} and can be achieved by taking  $\beta=1$ and $\Psi(t)=t$. The $\Psi$-Hilfer derivative $^H \mathcal{D}^{\alpha,\,\beta\,;\, \Psi}_{0 +} (\cdot)$ is generalized derivative operator that incorporates many notable fractional derivatives recorded in \cite{Vanterler1} as its special cases including most widely used derivative operators, such as, Riemann-Liouville derivative \cite{Kilbas}, Caputo derivative \cite{Kilbas},  Hadmard derivative\cite{Kilbas},  Erd$\acute{e}$ly-Kober derivative \cite{Kilbas}, Hilfer  derivative \cite{Hilfer}, Katugampola derivative \cite{Katugampola} etc. Along these lines, the outcomes acquired in the current paper are likewise valid for the fractional derivatives listed in \cite{Vanterler1} as  particular cases of the $\Psi$-Hilfer derivative.

The paper is composed as follows: In section 2, we collect a few definitions and essential outcomes about $\Psi$-Hilfer fractional derivative. Further, we have provided some results which assume a significant role in our analysis.   In Section 3, we prove the comparison results for $\Psi$-Hilfer  FDEs. Section 4 deals with the existence and uniqueness of the linear $\Psi$-Hilfer BVPs. In Section 5, the existence and uniqueness results are proved for the nonlinear $\Psi$-Hilfer BVPs \eqref{eq511}-\eqref{eq512}.  An example is provided in section 6 to verify the assurance of our primary outcomes. 

\section{Preliminaries} \label{preliminaries}

Let $[a,b]$ $(0<a<b<\infty)$ be a finite interval and $\Psi\in C^{1}([a,b],\mathbb{R})$ be an increasing function such that $\Psi'(t)\neq 0$, $ t\in [a,b]$. Consider the weighted space \cite{Vanterler1} 
\begin{equation*}
\mathcal{X}:= C_{1-\,\gamma ;\, \Psi }\left[ a,b\right] =\left\{ \mathfrak{h}:\left( a,b\right]
\rightarrow \mathbb{R}~\big|~\left( \Psi \left( t\right) -\Psi \left(
a\right) \right) ^{1-\,\gamma }\mathfrak{h}(t) \in C\left[ a,b\right]
\right\} ,\text{ }0< \gamma \leq 1,
\end{equation*}
endowed with the norm
\begin{equation}\label{space1}
\left\Vert \mathfrak{h}\right\Vert _{C_{1-\,\gamma ;\Psi }\left[ a,b\right] }=\underset{t\in \left[ a,b\right] 
}{\max }\left\vert \left( \Psi \left( t\right) -\Psi \left( a\right) \right)
^{1-\,\gamma }\mathfrak{h}(t) \right\vert,
\end{equation}
where $\gamma=\alpha+\beta(1-\alpha)$. Then, $\left(\mathcal{X}, \, \left\Vert \cdot\right\Vert _{C_{1-\,\gamma ;\Psi }\left( J,\,\R\right)}\right)$ is a partially ordered Banach space with the  partial order relation $\preceq$ defined by 
$$
x, \, y \in \mathcal{X},\, x\preceq y\, ~\text{if and only if}~ \, x(t) \leq y(t),\, t\in  (0,\,T].
$$

\begin{definition} [\cite{Kilbas}]
Let $\mu>0 ~(\mu \in \R)$, $\mathfrak{h}$ be an integrable function defined on $[a,b]$. Then, the $\Psi$-Riemann--Liouville fractional integral of a function $\mathfrak{h}$ of order $\mu$ with respect to $\Psi$ is given by 
\end{definition}
\begin{equation*}\label{P1}
I_{a+}^{\mu \, ;\Psi }\mathfrak{h}(t) =\frac{1}{\Gamma \left( \alpha
\right) }\int_{a}^{t}\Psi ^{\prime }\left( s\right) \left( \Psi \left(
t\right) -\Psi \left( s\right) \right) ^{\mu -1}\mathfrak{h}(s) ds.
\end{equation*}

\begin{definition}[\cite{Vanterler1}]
 Let $0<\alpha <1 $ and $\mathfrak{h} \in C^{1}([a,b],\mathbb{R})$. Then, the   $\Psi$-Hilfer fractional derivative   of a function $\mathfrak{h}$ of order $\alpha$ and type $\beta\left( 0\leq \beta \leq 1\right) $, is defined by
\begin{equation*}\label{HIL}
^{H}\mathcal{D}_{a+}^{\alpha ,\beta \, ;\Psi }\mathfrak{h}\left(t\right) =I_{a+}^{\beta \left(
1-\alpha \right) \, ;\Psi }\left( \frac{1}{\Psi ^{\prime }\left( t\right) }\frac{d}{dt}\right) I_{a+}^{\left( 1-\beta \right) \left( 1-\alpha
\right) \, ;\Psi }\mathfrak{h}(t).
\end{equation*}
\end{definition}

\begin{Lemma}[\cite{Kilbas,Vanterler1}]\label{lema2} 
Let $\mu_i>0~(i=1,2)$ and $\delta>0$. Then,
\begin{enumerate}
\item [(a)] $\mathcal{I}_{a+}^{\mu_1 \, ;\,\Psi }\mathcal{I}_{a+}^{\mu_2 \, ;\,\Psi }\mathfrak{h}(t)=\mathcal{I}_{a+}^{\mu_1+\mu_2  \, ;\,\Psi }\mathfrak{h}(t)$.
\item [(b)]$
\mathcal{I}_{a+}^{\mu \, ;\,\Psi }\left( \Psi \left( t\right) -\Psi \left( a\right)
\right) ^{\delta -1}=\frac{\Gamma \left( \delta \right) }{\Gamma \left(
\mu +\delta \right) }\left( \Psi \left(t\right) -\Psi \left( a\right)
\right) ^{\mu +\delta -1}.
$
\end{enumerate}
\end{Lemma}

\begin{Lemma}[\cite{Vanterler1}]\label{teo1} 
If $\mathfrak{h}\in C^{1}[a,b]$, $0<\alpha<1$ and $0\leq \beta \leq 1$, then
\begin{enumerate}[topsep=0pt,itemsep=-1ex,partopsep=1ex,parsep=1ex]
\item [(a)]
$
I_{a+}^{\alpha \, ;\Psi }\text{ }^{H}\mathcal{D}_{a+}^{\alpha ,\beta \, ;\Psi }\mathfrak{h}\left( t\right) =\mathfrak{h}\left( t\right) -\frac{\left( \Psi \left( t\right) -\Psi \left( a\right) \right) ^{\gamma -1}}{\Gamma \left( \gamma \right) }I_{a+}^{ 1-\gamma  \, ;\Psi }\mathfrak{h}(a)$.
\item [(a)]
 $
^{H}\mathcal{D}_{a+}^{\alpha ,\beta \, ;\Psi }I_{a+}^{\alpha \, ;\Psi }\mathfrak{h}\left( t\right)
=\mathfrak{h}\left( t\right).
$
\end{enumerate}
\end{Lemma}

\begin{Lemma}[\cite{Vanterler2}]\label{lema1}
 If $\mu >0$ and \, $0\leq \omega <1,$ then $\mathcal{I}_{a+}^{\mu \, ;\,\Psi }(\cdot)$ is bounded from $C_{\omega \, ;\,\Psi }\left[ a,b\right] $ to $C_{\omega \, ;\,\Psi }\left[ a,b\right] .$ In addition, if $\omega \leq \mu $, then $\mathcal{I}_{a+}^{\mu \, ;\,\Psi }(\cdot)$ is bounded from $C_{\omega \, ;\,\Psi }\left[ a,b\right] $ to $C\left[ a,b\right] $.
\end{Lemma}
 
 \begin{Lemma}[\cite{Mali1}]\label{lin1} 
 Let $g\in C_{1-\,\gamma ;\, \Psi }\left( J,\,\R\right)$ and $\eta\in\R$. Then, the solution of the Cauchy problem
 for FDEs with constant coefficient involving $\Psi$-Hilfer fractional derivative,
 	\begin{align*}
 	&  ^H \mathcal{D}^{\alpha,\,\beta\,;\, \Psi}_{0 +}y(t)- \eta y(t)
 	=g(t),~~t \in  (0,\,T],\\
 	& \mathcal{I}_{0+}^{1-\,\gamma;\,\Psi } y(0)= y_0,
 	\end{align*}
 	is given by 
 	\begin{align*}
 	y(t)&=y_0 (\Psi(t)-\Psi(0))^{\gamma-1}E_{\alpha,\,\gamma}\left(\eta (\Psi(t)-\Psi(0))^\alpha \right)\\
 	&+\int_{0}^{t} \Psi'(s)  (\Psi(t)-\Psi(s))^{\alpha-1}  E_{\alpha,\,\alpha}\left(\eta (\Psi(t)-\Psi(s))^\alpha \right) g(s) ds,~~t \in  (0,\,T].
 	\end{align*}
 \end{Lemma}

 \begin{Lemma}[\cite{Diethelm1}] \label{lem28}
Let $n_1, n_2 > 0, ~(n_i \in \mathbb{R}), ~i=1,2$.	Consider the two parameter Mittag--Leffler function defined by  $E_{n_1,n_2}(z)=\sum_{k=0}^{\infty}\frac{z^k}{\Gamma(n_1 k+n_2)},\, z\in\mathbb{C}$. Then, the power series defining $E_{n_1,n_2}(z)$  is convergent for all $z\in\mathbb{C}$.
\end{Lemma}

 \begin{Lemma}[Location of roots theorem (\cite{Bartle})] \label{lin3} 
Let $I=[a, b]$ and let $f: I\rightarrow \R$ be continuous on $I$. If $f(a)<0<f(b)$, or if $f(a)>0>f(b)$, then there exists a number $c\in (a, b)$ such that $f(c)=0$.
 \end{Lemma}

\begin{Lemma}[\cite{Linb1}] \label{lem29}
	Let $\mathcal{X}$ be a partially ordered Banach space, $\{x_n\}\subset \mathcal{X}$ a monotone sequence and relatively compact set, then $\{x_n\}$ is convergent.
\end{Lemma}

\begin{Lemma}[\cite{Linb1}] \label{lem30}
	Let $\mathcal{X}$ be a partially ordered Banach space, $x_n \preceq y_n(n=1, 2, 3,\cdots)$ if $x_n\rightarrow x^*,\, y_n\rightarrow y^*$ we have $x^*\preceq y^*$.
\end{Lemma}

   \begin{definition}
   Let $u \in C_{1-\,\gamma ;\, \Psi }\left( J,\,\R\right)$. We say that $u$ is a lower solution of the linear $\Psi$-Hilfer BVP \eqref{eq513}-\eqref{eq514} if 
   $$
 ^H \mathcal{D}^{\alpha,\,\beta\,;\, \Psi}_{0 +}u(t)- M u(t) \leq g(t) - a_{u}(t),~t\in(0,\,T],
   $$
   where 
   \begin{equation}\label{aa2.6}
   a_{u}(t)=
   \begin{cases}
 0, ~\text{if} \,\,  r\,\mathcal{I}_{0+}^{1-\,\gamma;\,\Psi } u(T)\geq  \mathcal{I}_{0+}^{1-\,\gamma;\,\Psi } u(0)\\
 \frac{1}{r}\left(  ^H \mathcal{D}^{\alpha,\,\beta\,;\, \Psi}_{0 +} \xi(t)- M\xi(t) \right) \left( \mathcal{I}_{0+}^{1-\,\gamma;\,\Psi } u(0)- r\,\mathcal{I}_{0+}^{1-\,\gamma;\,\Psi } u(T)\right),\\
 ~\text{if} \,\,r\,\mathcal{I}_{0+}^{1-\,\gamma;\,\Psi } u(T) <  \mathcal{I}_{0+}^{1-\,\gamma;\,\Psi } u(0),
   \end{cases}
     \end{equation}
and  \begin{equation}\label{aa2.6a}
   \xi(t)=\frac{\Gamma(2+\delta-\gamma)}{\Gamma(\delta+1)}   \frac{(\Psi(t)-\Psi(0))^{\delta}}{(\Psi(T)-\Psi(0))^{1-\,\gamma+\delta}},~\delta>0,\, t\in J.
   \end{equation}
   Clearly, $\xi\in C_{1-\,\gamma ;\, \Psi }\left( J,\,\R\right)$.
   \end{definition}

  \begin{definition}
   Let $v \in C_{1-\,\gamma ;\, \Psi }\left( J,\,\R\right)$. We say that $v$ is an upper solution of the linear $\Psi$-Hilfer BVP \eqref{eq513}-\eqref{eq514} if 
   $$
 ^H \mathcal{D}^{\alpha,\,\beta\,;\, \Psi}_{0 +}v(t)- M v(t) \geq g(t) + b_{v}(t),~t\in(0,\,T],
   $$
   where 
   \begin{equation}\label{aa2.7}
   b_{v}(t)=
   \begin{cases}
 0,  ~\text{if} \,\,    r\,\mathcal{I}_{0+}^{1-\,\gamma;\,\Psi } v(T)\leq  \mathcal{I}_{0+}^{1-\,\gamma;\,\Psi } v(0)\\
 \frac{1}{r}\left(  ^H \mathcal{D}^{\alpha,\,\beta\,;\, \Psi}_{0 +}\xi(t)- M\xi(t) \right) \left(r\,\mathcal{I}_{0+}^{1-\,\gamma;\,\Psi } v(T) - \mathcal{I}_{0+}^{1-\,\gamma;\,\Psi } v(0)  \right),\\
  ~\text{if} \,\, r\,\mathcal{I}_{0+}^{1-\,\gamma;\,\Psi } v(T) >  \mathcal{I}_{0+}^{1-\,\gamma;\,\Psi } v(0),
   \end{cases}
   \end{equation}
   where $\xi(t)$ is defined as in the equation \eqref{aa2.6a}.
   \end{definition}
   
\section{Comparison Theorems}
 \begin{theorem} \label{lem25}
Assume that $y \in C_{1-\,\gamma ;\, \Psi }\left( J,\,\R\right)$ and satisfies 
 \begin{align}
    &^H \mathcal{D}^{\alpha,\,\beta\,;\, \Psi}_{0 +}y(t)- M y(t)
    \leq 0, ~t \in  (0,\,T],  ~\label{eq515}\\
  & \mathcal{I}_{0+}^{1-\,\gamma;\,\Psi } y(0)\leq 0.\label{eq516}
    \end{align} 
 Then, $y(t)\leq0$, 
  $\,t\in(0,\,T]$.
 \end{theorem}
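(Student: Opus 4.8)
The plan is to reduce the differential inequality to the explicit solution formula for the associated linear $\Psi$-Hilfer Cauchy problem and then read off the sign of $y$ from the sign of each factor in that formula.

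First I would set $p(t):=My(t)-{}^{H}\mathcal{D}^{\alpha,\beta;\Psi}_{0+}y(t)$, so that \eqref{eq515} gives $p(t)\geq 0$ for $t\in(0,T]$, and (using the standing regularity assumption that the $\Psi$-Hilfer derivative of $y$ lies in the weighted space, so that the inequality is meaningful) $p\in C_{1-\gamma;\Psi}(J,\R)$. Writing $y_0:=\mathcal{I}_{0+}^{1-\gamma;\Psi}y(0)\leq 0$ by \eqref{eq516}, the function $y$ solves the Cauchy problem $^{H}\mathcal{D}^{\alpha,\beta;\Psi}_{0+}y(t)-My(t)=-p(t)$, $\mathcal{I}_{0+}^{1-\gamma;\Psi}y(0)=y_0$. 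Applying Lemma \ref{lin1} with $\eta=M$ and $g=-p$ yields
\[
y(t)=y_0(\Psi(t)-\Psi(0))^{\gamma-1}E_{\alpha,\gamma}\bigl(M(\Psi(t)-\Psi(0))^{\alpha}\bigr)-\int_{0}^{t}\Psi'(s)(\Psi(t)-\Psi(s))^{\alpha-1}E_{\alpha,\alpha}\bigl(M(\Psi(t)-\Psi(s))^{\alpha}\bigr)p(s)\,ds .
\]

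Next I would check the sign of every factor. Since $\Psi$ is increasing with $\Psi'>0$, we have $\Psi(t)-\Psi(0)>0$ and $\Psi(t)-\Psi(s)>0$ for $0\leq s<t$, hence $(\Psi(t)-\Psi(0))^{\gamma-1}>0$ and $(\Psi(t)-\Psi(s))^{\alpha-1}>0$. Because $M>0$ and $\alpha>0$, $\gamma=\alpha+\beta(1-\alpha)\in(0,1]$, the arguments of the Mittag--Leffler functions are nonnegative reals; and since $E_{\alpha,\gamma}(z)=\sum_{k\geq0}z^{k}/\Gamma(\alpha k+\gamma)$ and $E_{\alpha,\alpha}(z)=\sum_{k\geq0}z^{k}/\Gamma(\alpha k+\alpha)$ have strictly positive coefficients and converge on all of $\C$ by Lemma \ref{lem28}, both functions are positive at such arguments. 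Combining these facts with $y_0\leq 0$ and $p(s)\geq 0$, the first term of the representation is $\leq 0$ and the integral is $\geq 0$, so that $y(t)\leq 0$ for every $t\in(0,T]$.

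I do not expect a genuine obstacle; the only points needing care are confirming that Lemma \ref{lin1} is applicable (that the right-hand side $-p$ of the reduced Cauchy problem belongs to $C_{1-\gamma;\Psi}(J,\R)$) and recording rigorously the positivity of the two-parameter Mittag--Leffler functions on the nonnegative real axis, which follows at once from the positivity of the reciprocal Gamma values together with Lemma \ref{lem28}.
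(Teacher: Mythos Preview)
Your argument is correct and is essentially the same as the paper's: both define the forcing term (your $-p$, the paper's $\sigma$) so that $y$ becomes the unique solution of the linear $\Psi$-Hilfer Cauchy problem in Lemma~\ref{lin1}, and then deduce $y(t)\le 0$ from the explicit representation by checking that $y_0\le 0$, the forcing term is nonpositive, and all kernel factors (powers of $\Psi(t)-\Psi(s)$ and the Mittag--Leffler functions at nonnegative real arguments) are nonnegative. Your write-up is slightly more explicit about the regularity needed to invoke Lemma~\ref{lin1} and about why $E_{\alpha,\gamma}$ and $E_{\alpha,\alpha}$ are positive, but the logical route is identical.
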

 \begin{proof}
 Consider the following linear IVP
 \begin{align}
     &^H \mathcal{D}^{\alpha,\,\beta\,;\, \Psi}_{0 +}y(t)- M y(t)
     =\sigma(t), ~t \in  (0,\,T],  ~\label{eq517}\\
    &\mathcal{I}_{0+}^{1-\,\gamma;\,\Psi } y(0)=y_0,\label{eq518}
     \end{align} 
     where $\sigma\in C_{1-\,\gamma ;\, \Psi }\left( J,\,\R\right)$.
 By Lemma \ref{lin1}, the linear IVP \eqref{eq517}-\eqref{eq518} has a unique solution  given by
 \begin{align}\label{eq519}
 y(t)&=y_0 (\Psi(t)-\Psi(0))^{\gamma-1}E_{\alpha,\,\gamma}\left(M (\Psi(t)-\Psi(0))^\alpha \right)\nonumber\\
 &+\int_{0}^{t} \Psi'(s)  (\Psi(t)-\Psi(s))^{\alpha-1}  E_{\alpha,\,\alpha}\left(M (\Psi(t)-\Psi(s))^\alpha \right) \sigma(s) ds,~t\in(0,\,T].
 \end{align}
 From  equations \eqref{eq516} and \eqref{eq518}, we have $y_0\leq0$. Since  $\Psi$ is an increasing function, we have $(\Psi(t)-\Psi(0))^{\gamma-1}\geq0,\,t\in J$ and $(\Psi(t)-\Psi(0))^{\alpha}\geq0,\,t\in J$.
 This gives, $E_{\alpha,\,\gamma}\left(M (\Psi(t)-\Psi(0))^\alpha \right)>0,\,t\in J$. Therefore, from \eqref{eq519}, we have
\begin{equation}\label{ka1}
y(t)\leq\int_{0}^{t} \Psi'(s)  (\Psi(t)-\Psi(s))^{\alpha-1}  E_{\alpha,\,\alpha}\left(M (\Psi(t)-\Psi(s))^\alpha \right) \sigma(s) ds,~t\in(0,\,T].
\end{equation}
 From equations \eqref{eq515} and \eqref{eq517},  it follows that $\sigma(t)\leq0, \, t\in J$.  Since  $\Psi: J\rightarrow \R$ is an increasing  continuous function, we have $\Psi'(t)>0,$ for $ \, t\in J$, $(\Psi(t)-\Psi(s))^{\alpha-1}\geq 0,$ for $  t\geq s \geq 0$ and
  $E_{\alpha,\,\gamma}\left(M (\Psi(t)-\Psi(s))^\alpha \right)>0,$ for $ t\geq s \geq 0$. Therefore,
  \begin{equation}\label{ka11}
\Psi'(s)  (\Psi(t)-\Psi(s))^{\alpha-1}  E_{\alpha,\,\alpha}\left(M (\Psi(t)-\Psi(s))^\alpha \right) \sigma(s)\leq 0, ~\text{for}\,~  0\leq s \leq t \leq T.
  \end{equation} 
  From \eqref{ka1} and \eqref{ka11}, we obtain 
  $$
  y(t)\leq0, ~t\in(0,\,T].
  $$
 \end{proof}
 \begin{theorem} \label{lem26}
 Let $y \in C_{1-\,\gamma ;\, \Psi }\left( J,\,\R\right)$  satisfies  
 \begin{align}
    &^H \mathcal{D}^{\alpha,\,\beta\,;\, \Psi}_{0 +}y(t)- M y(t)
    \leq -a_u(t), ~t \in  (0,\,T],  ~\label{eq520}\\
  & \mathcal{I}_{0+}^{1-\,\gamma;\,\Psi } y(0)\leq 0.\label{eq521}
    \end{align} 
 Then, $y(t)\leq0$, $\,t\in(0,\,T]$.
 \end{theorem}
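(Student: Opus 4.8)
The plan is to deduce Theorem~\ref{lem26} directly from the already-established Theorem~\ref{lem25}. The mechanism is that the extra term $a_u$ on the right-hand side of \eqref{eq520} is nonnegative, so a function satisfying \eqref{eq520}--\eqref{eq521} automatically satisfies the hypotheses \eqref{eq515}--\eqref{eq516}.

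First I would verify that $a_u(t)\ge 0$ for every $t\in(0,T]$. Inspecting \eqref{aa2.6}: on the first branch $a_u\equiv 0$; on the second branch the factor $\mathcal{I}_{0+}^{1-\gamma;\Psi}u(0)-r\,\mathcal{I}_{0+}^{1-\gamma;\Psi}u(T)$ is strictly positive precisely because that branch is in force, and $r>0$, so the sign of $a_u(t)$ coincides with that of ${}^H\mathcal{D}^{\alpha,\beta;\Psi}_{0+}\xi(t)-M\xi(t)$. Hence everything reduces to showing ${}^H\mathcal{D}^{\alpha,\beta;\Psi}_{0+}\xi(t)-M\xi(t)\ge 0$ on $J$. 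For this I would first record the $\Psi$-Hilfer power rule, obtained by applying the definition of ${}^H\mathcal{D}^{\alpha,\beta;\Psi}_{0+}$ and Lemma~\ref{lema2}(b) to $\bigl(\Psi(t)-\Psi(0)\bigr)^{\delta}$:
\[
{}^H\mathcal{D}^{\alpha,\beta;\Psi}_{0+}\bigl(\Psi(t)-\Psi(0)\bigr)^{\delta}=\frac{\Gamma(\delta+1)}{\Gamma(\delta+1-\alpha)}\bigl(\Psi(t)-\Psi(0)\bigr)^{\delta-\alpha},\qquad \delta>0 .
\]
Substituting this together with \eqref{aa2.6a}, a short computation yields
\[
{}^H\mathcal{D}^{\alpha,\beta;\Psi}_{0+}\xi(t)-M\xi(t)=\frac{\Gamma(2+\delta-\gamma)}{\Gamma(\delta+1)}\,\frac{\bigl(\Psi(t)-\Psi(0)\bigr)^{\delta-\alpha}}{\bigl(\Psi(T)-\Psi(0)\bigr)^{1-\gamma+\delta}}\left[\frac{\Gamma(\delta+1)}{\Gamma(\delta+1-\alpha)}-M\bigl(\Psi(t)-\Psi(0)\bigr)^{\alpha}\right].
\]
Since $\Psi$ is increasing the bracketed term attains its minimum over $J$ at $t=T$, so the whole expression is $\ge 0$ on $J$ as soon as $\delta$ is taken large enough that $\Gamma(\delta+1)/\Gamma(\delta+1-\alpha)\ge M\bigl(\Psi(T)-\Psi(0)\bigr)^{\alpha}$; this is possible because $\Gamma(\delta+1)/\Gamma(\delta+1-\alpha)\sim\delta^{\alpha}\to\infty$. (If $\delta$ in \eqref{aa2.6a} is regarded as fixed, this is exactly the admissibility condition that the auxiliary function $\xi$ must satisfy.)

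With $a_u\ge 0$ in hand, \eqref{eq520} gives ${}^H\mathcal{D}^{\alpha,\beta;\Psi}_{0+}y(t)-My(t)\le -a_u(t)\le 0$ on $(0,T]$, which together with \eqref{eq521} places $y$ in the scope of Theorem~\ref{lem25}; therefore $y(t)\le 0$ for $t\in(0,T]$, as claimed.

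I expect the only step requiring real care to be the identity for ${}^H\mathcal{D}^{\alpha,\beta;\Psi}_{0+}\bigl(\Psi(t)-\Psi(0)\bigr)^{\delta}$ and the subsequent sign analysis: the function $\bigl(\Psi(t)-\Psi(0)\bigr)^{\delta}$ is not $C^{1}$ at $t=0$ when $0<\delta<1$, so the power rule should be derived through the iterated fractional integrals (Lemma~\ref{lema2}) rather than by naive differentiation, and one must track how the free parameter $\delta$ keeps the bracketed factor nonnegative uniformly on $J$. Everything else is an immediate reduction to Theorem~\ref{lem25}.
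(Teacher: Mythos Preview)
Your reduction hinges on the claim that $a_u(t)\ge 0$, which in the nontrivial branch boils down to
\[
{}^H\mathcal{D}^{\alpha,\beta;\Psi}_{0+}\xi(t)-M\xi(t)\ge 0,\qquad t\in(0,T].
\]
Your own computation shows this is equivalent to
\[
\frac{\Gamma(\delta+1)}{\Gamma(\delta+1-\alpha)}\ \ge\ M\bigl(\Psi(T)-\Psi(0)\bigr)^{\alpha},
\]
and you then invoke the freedom to take $\delta$ large. But in the statement of the theorem $a_u$ is a \emph{given} function: it is built from a fixed $\xi$ with a fixed $\delta>0$ (the same $\delta$ that enters the definition of lower solution). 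You are not free to enlarge $\delta$ after the fact, and the paper imposes no ``admissibility condition'' of the type you describe. For small $\delta$ and large $M$ the displayed inequality fails, $a_u$ becomes negative on part of $(0,T]$, and your chain $-a_u(t)\le 0$ breaks. So as written the argument does not prove the theorem for all $\delta>0$.

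The paper's proof avoids this sign issue entirely by a different mechanism. In Case~2 it sets
\[
\rho(t)=y(t)+\frac{1}{r}\,\xi(t)\Bigl(\mathcal{I}_{0+}^{1-\gamma;\Psi}u(0)-r\,\mathcal{I}_{0+}^{1-\gamma;\Psi}u(T)\Bigr),
\]
so that $y\le\rho$ (because $\xi\ge 0$ and the scalar factor is positive) and, crucially,
\[
{}^H\mathcal{D}^{\alpha,\beta;\Psi}_{0+}\rho(t)-M\rho(t)=\bigl({}^H\mathcal{D}^{\alpha,\beta;\Psi}_{0+}y(t)-My(t)\bigr)+a_u(t)\le 0,
\]
the $a_u$ terms cancelling \emph{identically}, with no sign information about $a_u$ required. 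Since $\mathcal{I}_{0+}^{1-\gamma;\Psi}\xi(0)=0$, one also gets $\mathcal{I}_{0+}^{1-\gamma;\Psi}\rho(0)\le 0$, and Theorem~\ref{lem25} applied to $\rho$ gives $y\le\rho\le 0$. This works for every $\delta>0$. Your power-rule identity for ${}^H\mathcal{D}^{\alpha,\beta;\Psi}_{0+}(\Psi(t)-\Psi(0))^{\delta}$ is correct and useful, but the right way to exploit $\xi$ here is through this cancellation, not through a sign estimate.
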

 \begin{proof}
 In  the view of definition of $a_u$ given in \eqref{aa2.6}, we give the proof in following two cases.\\
\textbf{ Case 1:} If $ r\,\mathcal{I}_{0+}^{1-\,\gamma;\,\Psi } u(T)\geq  \mathcal{I}_{0+}^{1-\,\gamma;\,\Psi } u(0)$ then $a_u(t)=0,\,t\in(0,\,T]$. Thus, equations  \eqref{eq520}-\eqref{eq521} reduces to equations \eqref{eq515}-\eqref{eq516}. Applying Theorem \ref{lem25}, we obtain $y(t)\leq0$,  $t\in (0,\,T]$.\\
\textbf{ Case 2:} If $ r\,\mathcal{I}_{0+}^{1-\,\gamma;\,\Psi } u(T) <  \mathcal{I}_{0+}^{1-\,\gamma;\,\Psi } u(0)$ then
  $$
a_u(t)= \frac{1}{r}\left(  ^H \mathcal{D}^{\alpha,\,\beta\,;\, \Psi}_{0 +} \xi(t)- M\xi(t) \right) \left( \mathcal{I}_{0+}^{1-\,\gamma;\,\Psi } u(0)- r\,\mathcal{I}_{0+}^{1-\,\gamma;\,\Psi } u(T)\right),\,t\in(0,\,T].
  $$
Define
\begin{equation}\label{ka2}
\rho(t)=y(t)+\frac{1}{r} \xi(t) \left( \mathcal{I}_{0+}^{1-\,\gamma;\,\Psi } u(0)-  r\,\mathcal{I}_{0+}^{1-\,\gamma;\,\Psi } u(T) \right), ~t \in  (0,\,T].
\end{equation}
By assumption
$$
\mathcal{I}_{0+}^{1-\,\gamma;\,\Psi } u(0)- r\,\mathcal{I}_{0+}^{1-\,\gamma;\,\Psi } u(T)>0.
$$
Further, from  equation \eqref{aa2.6a}, we have $\xi(t)\geq0, \, t\in J$. Hence, we have
 \begin{equation}\label{ka3}
 \frac{1}{r} \xi(t) \left( \mathcal{I}_{0+}^{1-\,\gamma;\,\Psi } u(0)-  r\,\mathcal{I}_{0+}^{1-\,\gamma;\,\Psi } u(T) \right)\geq0,\,  t\in J.
\end{equation}
From \eqref{ka2} and \eqref{ka3}, it follows that
 \begin{equation}\label{eq551a}
y(t) \leq \rho(t), ~t \in  (0,\,T].
 \end{equation}
 Next, using \eqref{ka2}, we have
 \begin{align*}
 &^H \mathcal{D}^{\alpha,\,\beta\,;\, \Psi}_{0 +}\rho(t)- M \rho(t)\\
&=\, ^H \mathcal{D}^{\alpha,\,\beta\,;\, \Psi}_{0 +}\left[ y(t)+\frac{1}{r} \xi(t) \left( \mathcal{I}_{0+}^{1-\,\gamma;\,\Psi } u(0)-  r\,\mathcal{I}_{0+}^{1-\,\gamma;\,\Psi } u(T) \right)\right] \\
&\qquad-M\left[ y(t)+\frac{1}{r} \xi(t) \left( \mathcal{I}_{0+}^{1-\,\gamma;\,\Psi } u(0)-  r\,\mathcal{I}_{0+}^{1-\,\gamma;\,\Psi } u(T) \right)\right] \\
&=\, ^H \mathcal{D}^{\alpha,\,\beta\,;\, \Psi}_{0 +} y(t)- M y(t)+\frac{1}{r} \left[  ^H \mathcal{D}^{\alpha,\,\beta\,;\, \Psi}_{0 +}\xi(t)- M\xi(t)\right]  \left( \mathcal{I}_{0+}^{1-\,\gamma;\,\Psi } u(0)-  r\,\mathcal{I}_{0+}^{1-\,\gamma;\,\Psi } u(T) \right)\\
&=\,^H \mathcal{D}^{\alpha,\,\beta\,;\, \Psi}_{0 +} y(t)- M y(t)+ a_u(t), ~t \in  (0,\,T].
 \end{align*}
 Using the inequality \eqref{eq520}, above equation reduces to the following inequality
\begin{equation}\label{ka4}
 ^H \mathcal{D}^{\alpha,\,\beta\,;\, \Psi}_{0 +}\rho(t)- M \rho(t)\leq 0, ~t \in  (0,\,T].
 \end{equation}
Further, using \eqref{ka2} and Lemma \ref{lema2}(b), we have
 \begin{align}\label{ka5}
 & \mathcal{I}_{0+}^{1-\,\gamma;\,\Psi } \rho(t)\nonumber\\
  &= \mathcal{I}_{0+}^{1-\,\gamma;\,\Psi }y(t)+  \frac{1}{r}  \left( \mathcal{I}_{0+}^{1-\,\gamma;\,\Psi } u(0)-  r\,\mathcal{I}_{0+}^{1-\,\gamma;\,\Psi } u(T) \right)\mathcal{I}_{0+}^{1-\,\gamma;\,\Psi} \xi(t)\nonumber\\
 &=\mathcal{I}_{0+}^{1-\,\gamma;\,\Psi }y(t)\nonumber\\
 &+  \frac{1}{r}  \left( \mathcal{I}_{0+}^{1-\,\gamma;\,\Psi } u(0)-  r\,\mathcal{I}_{0+}^{1-\,\gamma;\,\Psi } u(T) \right)\mathcal{I}_{0+}^{1-\,\gamma;\,\Psi }\left[ \frac{\Gamma(2+\delta-\gamma)}{\Gamma(\delta+1)}\frac{(\Psi(t)-\Psi(0))^{\delta}}{(\Psi(T)-\Psi(0))^{1-\,\gamma+\delta}}\right]\nonumber\\
&=\mathcal{I}_{0+}^{1-\,\gamma;\,\Psi }y(t)+  \frac{1}{r}  \left( \mathcal{I}_{0+}^{1-\,\gamma;\,\Psi } u(0)-  r\,\mathcal{I}_{0+}^{1-\,\gamma;\,\Psi } u(T) \right)\frac{(\Psi(t)-\Psi(0))^{1-\,\gamma+\delta}}{(\Psi(T)-\Psi(0))^{1-\,\gamma+\delta}}.
 \end{align}
From  \eqref{eq521} and  \eqref{ka5}, it follows that
\begin{equation}\label{ka6}
   \mathcal{I}_{0+}^{1-\,\gamma;\,\Psi } \rho(0) \leq 0.
 \end{equation}
By applying Theorem \ref{lem25} to the inequalities \eqref{ka4} and \eqref{ka6}, we have
\begin{equation}\label{eq552a}
\rho(t)\leq0, ~t \in  (0,\,T].
\end{equation}
Further, from inequalities \eqref{eq551a} and \eqref{eq552a}, we obtain
 $y(t)\leq0, \, t\in  (0,\,T].$
  \end{proof}
  
  Following similar type of steps as in the proof of  Theorem \ref{lem26}, one can easily prove the following theorem.
   \begin{theorem} \label{lem27}
 Let $y \in C_{1-\,\gamma ;\, \Psi }\left( J,\,\R\right)$  satisfies 
   \begin{align}
      &^H \mathcal{D}^{\alpha,\,\beta\,;\, \Psi}_{0 +}y(t)- M y(t)
      \leq -b_v(t), ~t \in  (0,\,T],  ~\label{eq522}\\
     &\mathcal{I}_{0+}^{1-\,\gamma;\,\Psi } y(0)\leq 0.\label{eq524}
      \end{align} 
   Then, $y(t)\leq0, \, t\in  (0,\,T].$
   \end{theorem}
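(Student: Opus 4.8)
The plan is to mirror the proof of Theorem~\ref{lem26} almost verbatim, with the roles of lower/upper corrections interchanged. In the view of the definition of $b_v$ given in \eqref{aa2.7}, one splits into two cases. \textbf{Case 1:} if $r\,\mathcal{I}_{0+}^{1-\,\gamma;\,\Psi } v(T)\leq \mathcal{I}_{0+}^{1-\,\gamma;\,\Psi } v(0)$, then $b_v(t)=0$, so \eqref{eq522}--\eqref{eq524} collapse to \eqref{eq515}--\eqref{eq516}, and Theorem~\ref{lem25} immediately yields $y(t)\leq 0$ on $(0,T]$.

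\textbf{Case 2:} if $r\,\mathcal{I}_{0+}^{1-\,\gamma;\,\Psi } v(T) > \mathcal{I}_{0+}^{1-\,\gamma;\,\Psi } v(0)$, then by \eqref{aa2.7}
$$
b_v(t)=\frac{1}{r}\left( {}^H \mathcal{D}^{\alpha,\,\beta\,;\, \Psi}_{0 +}\xi(t)- M\xi(t) \right)\left( r\,\mathcal{I}_{0+}^{1-\,\gamma;\,\Psi } v(T) - \mathcal{I}_{0+}^{1-\,\gamma;\,\Psi } v(0)\right).
$$
I would introduce the auxiliary function
$$
\rho(t)=y(t)+\frac{1}{r}\,\xi(t)\left( r\,\mathcal{I}_{0+}^{1-\,\gamma;\,\Psi } v(T) - \mathcal{I}_{0+}^{1-\,\gamma;\,\Psi } v(0)\right),\quad t\in(0,T].
$$
Since the case hypothesis gives $r\,\mathcal{I}_{0+}^{1-\,\gamma;\,\Psi } v(T) - \mathcal{I}_{0+}^{1-\,\gamma;\,\Psi } v(0)>0$ and $\xi(t)\geq 0$ on $J$ by \eqref{aa2.6a}, the added term is nonnegative, so $y(t)\leq \rho(t)$ on $(0,T]$. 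Applying ${}^H \mathcal{D}^{\alpha,\,\beta\,;\, \Psi}_{0 +}(\cdot)-M(\cdot)$ to $\rho$ and using linearity exactly as in Theorem~\ref{lem26}, the correction term reproduces $b_v(t)$, so by \eqref{eq522} one gets ${}^H \mathcal{D}^{\alpha,\,\beta\,;\, \Psi}_{0 +}\rho(t)- M\rho(t)\leq 0$ on $(0,T]$. Likewise, applying $\mathcal{I}_{0+}^{1-\,\gamma;\,\Psi }$ to $\rho$ and using Lemma~\ref{lema2}(b) on $\xi$ shows that $\mathcal{I}_{0+}^{1-\,\gamma;\,\Psi }\rho(t)$ equals $\mathcal{I}_{0+}^{1-\,\gamma;\,\Psi }y(t)$ plus a term proportional to $(\Psi(t)-\Psi(0))^{1-\gamma+\delta}/(\Psi(T)-\Psi(0))^{1-\gamma+\delta}$ which vanishes at $t=0$ (as $1-\gamma+\delta>0$), whence $\mathcal{I}_{0+}^{1-\,\gamma;\,\Psi }\rho(0)\leq 0$ by \eqref{eq524}. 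Theorem~\ref{lem25} then gives $\rho(t)\leq 0$ on $(0,T]$, and combining with $y(t)\leq\rho(t)$ yields $y(t)\leq 0$ on $(0,T]$.

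There is essentially no hard step here: the argument is a structural copy of Theorem~\ref{lem26} with the sign bookkeeping flipped, and every analytic ingredient (Lemma~\ref{lin1} via Theorem~\ref{lem25}, positivity of the Mittag--Leffler kernels, Lemma~\ref{lema2}(b)) has already been established. The only point requiring minor care is confirming that the $\rho$-correction term has the correct sign in the new case so that $y\leq\rho$ still holds, and that the fractional-derivative computation reproduces exactly $+b_v(t)$ rather than $-b_v(t)$; both follow from the way $b_v$ is defined in \eqref{aa2.7}. Hence the theorem follows, and I would simply state that the proof proceeds along the same lines as that of Theorem~\ref{lem26}.
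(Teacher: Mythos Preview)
Your proposal is correct and matches the paper's approach: the paper itself omits the proof of Theorem~\ref{lem27} and simply states that it follows by the same steps as Theorem~\ref{lem26}, which is precisely what you have carried out with the appropriate sign changes for $b_v$.
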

   

\section{Existence and uniqueness for the linear $\Psi$-Hilfer BVP}
In this section,  using the method of upper and lower solutions, we derive the existence and uniqueness results  for the linear $\Psi$-Hilfer BVP \eqref{eq513}-\eqref{eq514}. 
\begin{theorem}\label{lem531}
Assume that there exist upper and lower solutions 
$v, u \in C_{1-\,\gamma ;\, \Psi }\left( J,\,\R\right)$ respectively of the linear $\Psi$-Hilfer  BVP \eqref{eq513}-\eqref{eq514} such that $u\preceq v$. Then, the linear $\Psi$-Hilfer  BVP \eqref{eq513}-\eqref{eq514}  has a unique solution $y\in C_{1-\,\gamma ;\, \Psi }\left( J,\,\R\right)$ that satisfy $u\preceq y\preceq v$.
\end{theorem}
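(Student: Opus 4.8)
The plan is to reduce the boundary value problem to a one‑dimensional root–finding problem for the initial value, and then use the comparison theorems of Section 3 to locate the root and to squeeze the resulting solution between $u$ and $v$. For $\lambda\in\R$ let $y_\lambda$ denote the solution of the non‑homogeneous $\Psi$-Hilfer IVP ${}^{H}\mathcal{D}^{\alpha,\,\beta\,;\,\Psi}_{0+}y-My=g$, $\mathcal{I}_{0+}^{1-\,\gamma;\,\Psi}y(0)=\lambda$, which exists and is unique by Lemma \ref{lin1}. Since $y_\lambda$ already solves \eqref{eq513}, it solves the BVP \eqref{eq513}--\eqref{eq514} precisely when $F(\lambda):=\mathcal{I}_{0+}^{1-\,\gamma;\,\Psi}y_\lambda(0)-r\,\mathcal{I}_{0+}^{1-\,\gamma;\,\Psi}y_\lambda(T)=\lambda-r\,\mathcal{I}_{0+}^{1-\,\gamma;\,\Psi}y_\lambda(T)$ vanishes. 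The first computation I would carry out is the closed form of $\mathcal{I}_{0+}^{1-\,\gamma;\,\Psi}y_\lambda(T)$: writing the two Mittag--Leffler functions in the solution formula of Lemma \ref{lin1} as their power series (everywhere convergent by Lemma \ref{lem28}) and integrating term by term with Lemma \ref{lema2}(b) gives $\mathcal{I}_{0+}^{1-\,\gamma;\,\Psi}y_\lambda(T)=\lambda\,E_{\alpha,1}(M(\Psi(T)-\Psi(0))^\alpha)+B_g$, where $B_g$ depends only on $g$. Hence $F$ is affine in $\lambda$ with slope $1-r\,E_{\alpha,1}(M(\Psi(T)-\Psi(0))^\alpha)>0$ by the standing hypothesis $0<r<1/E_{\alpha,1}(M(\Psi(T)-\Psi(0))^\alpha)$; in particular $F$ is continuous and strictly increasing, so it has at most one zero, which already yields uniqueness (two BVP solutions would share the same initial value and hence coincide by Lemma \ref{lin1}).

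For existence I would trap the zero of $F$ between the initial data of the lower and upper solutions. Put $\lambda_u=\mathcal{I}_{0+}^{1-\,\gamma;\,\Psi}u(0)$ and $\lambda_v=\mathcal{I}_{0+}^{1-\,\gamma;\,\Psi}v(0)$; since $u\preceq v$, $\mathcal{I}_{0+}^{1-\,\gamma;\,\Psi}$ has a nonnegative kernel and maps $C_{1-\,\gamma;\,\Psi}(J,\R)$ into $C[0,T]$ (Lemma \ref{lema1} with $\omega=\mu=1-\gamma$), letting $t\to0^+$ gives $\lambda_u\le\lambda_v$. The key claim is $F(\lambda_u)\le0\le F(\lambda_v)$. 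For the lower solution: if $r\,\mathcal{I}_{0+}^{1-\,\gamma;\,\Psi}u(T)\ge\lambda_u$ then $a_u\equiv0$, so $p:=u-y_{\lambda_u}$ satisfies ${}^{H}\mathcal{D}^{\alpha,\,\beta\,;\,\Psi}_{0+}p-Mp\le0$ and $\mathcal{I}_{0+}^{1-\,\gamma;\,\Psi}p(0)=0$; Theorem \ref{lem25} gives $u\preceq y_{\lambda_u}$, hence $\mathcal{I}_{0+}^{1-\,\gamma;\,\Psi}u(T)\le\mathcal{I}_{0+}^{1-\,\gamma;\,\Psi}y_{\lambda_u}(T)$ and $F(\lambda_u)\le\lambda_u-r\,\mathcal{I}_{0+}^{1-\,\gamma;\,\Psi}u(T)\le0$. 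If $r\,\mathcal{I}_{0+}^{1-\,\gamma;\,\Psi}u(T)<\lambda_u$, I would reproduce the auxiliary function $\rho_p(t)=p(t)+\frac{1}{r}\xi(t)(\lambda_u-r\,\mathcal{I}_{0+}^{1-\,\gamma;\,\Psi}u(T))$ from the proof of Theorem \ref{lem26}: exactly as there one gets ${}^{H}\mathcal{D}^{\alpha,\,\beta\,;\,\Psi}_{0+}\rho_p-M\rho_p\le0$ and $\mathcal{I}_{0+}^{1-\,\gamma;\,\Psi}\rho_p(0)=0$, so $\rho_p\preceq0$ by Theorem \ref{lem25}; evaluating $\mathcal{I}_{0+}^{1-\,\gamma;\,\Psi}\rho_p$ at $t=T$ via \eqref{aa2.6a} and Lemma \ref{lema2}(b), the normalization of $\xi$ makes the boundary term collapse to $\lambda_u/r$, so $\lambda_u/r\le\mathcal{I}_{0+}^{1-\,\gamma;\,\Psi}y_{\lambda_u}(T)$, i.e. $F(\lambda_u)\le0$. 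The inequality $F(\lambda_v)\ge0$ follows by the mirror‑image argument with $b_v$, Theorem \ref{lem27}, and Theorem \ref{lem25}. Then Lemma \ref{lin3} applied to the continuous $F$ on $[\lambda_u,\lambda_v]$ furnishes $\lambda^*\in[\lambda_u,\lambda_v]$ with $F(\lambda^*)=0$, so $y^*:=y_{\lambda^*}$ solves \eqref{eq513}--\eqref{eq514}.

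It then remains to place $y^*$ between $u$ and $v$. For $p:=u-y^*$ one has ${}^{H}\mathcal{D}^{\alpha,\,\beta\,;\,\Psi}_{0+}p-Mp\le(g-a_u)-g=-a_u$ and $\mathcal{I}_{0+}^{1-\,\gamma;\,\Psi}p(0)=\lambda_u-\lambda^*\le0$, so Theorem \ref{lem26} gives $u\preceq y^*$; dually, $q:=y^*-v$ satisfies ${}^{H}\mathcal{D}^{\alpha,\,\beta\,;\,\Psi}_{0+}q-Mq\le g-(g+b_v)=-b_v$ and $\mathcal{I}_{0+}^{1-\,\gamma;\,\Psi}q(0)=\lambda^*-\lambda_v\le0$, so Theorem \ref{lem27} gives $y^*\preceq v$. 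Combined with the uniqueness noted above, this proves the theorem.

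I expect the main obstacle to be the sign claim $F(\lambda_u)\le0\le F(\lambda_v)$ in the regime where $a_u$ (respectively $b_v$) is non‑trivial: there it is not enough to quote the conclusion $u\preceq y_{\lambda_u}$ of Theorem \ref{lem26}, one must re‑enter its proof, use the auxiliary comparison function $\rho$, and exploit the precise constant in the definition \eqref{aa2.6a} of $\xi$ so that the term evaluated at $t=T$ cancels exactly against $r\,\mathcal{I}_{0+}^{1-\,\gamma;\,\Psi}u(T)$ and produces the decisive inequality for $F$. A secondary technicality is justifying the term‑by‑term $\Psi$-fractional integration of the Mittag--Leffler series that yields the closed form of $\mathcal{I}_{0+}^{1-\,\gamma;\,\Psi}y_\lambda(T)$, which rests on Lemma \ref{lem28} together with uniform convergence on $[0,T]$.
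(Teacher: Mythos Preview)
Your proposal is correct and follows the same overall strategy as the paper: parametrize solutions of \eqref{eq513} by their initial datum via Lemma \ref{lin1}, reduce the boundary condition \eqref{eq514} to a one-dimensional root-finding problem, use the explicit $\mathcal{I}_{0+}^{1-\gamma;\Psi}$-integral of the Mittag--Leffler solution to see monotonicity in $\lambda$, apply the Location of Roots Theorem (Lemma \ref{lin3}), and finally squeeze the resulting solution between $u$ and $v$ with Theorems \ref{lem25}--\ref{lem27}.

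The execution differs in a useful way. The paper first builds auxiliary functions $p,q$ (equal to $r\,u$ and $r\,v$, shifted by a multiple of $\xi$ in the ``bad'' boundary case) so that $p,q$ automatically satisfy $r\,\mathcal{I}_{0+}^{1-\gamma;\Psi}p(T)\ge \mathcal{I}_{0+}^{1-\gamma;\Psi}p(0)$ and the reverse for $q$; it then proves $p\preceq r\,y_\lambda\preceq q$ for $\lambda$ in the interval $[\mathcal{I}_{0+}^{1-\gamma;\Psi}p(T),\,\mathcal{I}_{0+}^{1-\gamma;\Psi}q(T)]$ and applies the root theorem there. You bypass the $p,q$ construction entirely and work on the interval $[\lambda_u,\lambda_v]=[\mathcal{I}_{0+}^{1-\gamma;\Psi}u(0),\,\mathcal{I}_{0+}^{1-\gamma;\Psi}v(0)]$, pulling the $\xi$-shift in only locally (as the comparison function $\rho_p$) to establish $F(\lambda_u)\le0$ in the nontrivial case; the normalization $\mathcal{I}_{0+}^{1-\gamma;\Psi}\xi(T)=1$ is exactly what makes the $u$-terms cancel and leave $\lambda_u/r\le \mathcal{I}_{0+}^{1-\gamma;\Psi}y_{\lambda_u}(T)$, as you anticipated. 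You also note that $F$ is affine with positive slope, which gives uniqueness in one line; the paper instead differentiates $g(\lambda)=-F(\lambda)$ to get $g'<0$. Your route is a little more direct and avoids the $r$-scaling and the separate $p\preceq q$ step, at the cost of re-entering the proof of Theorem \ref{lem26} once; the paper's $p,q$ machinery packages that work up front and is reused verbatim when the same theorem is applied in the nonlinear Section~5.
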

\begin{proof}
We give the  proof in  following two parts.\\
\textbf{Part I:} In this part we prove that the  linear $\Psi$-Hilfer BVP \eqref{eq513}-\eqref{eq514} has a unique solution. Consider the functions $p, q\in C_{1-\,\gamma ;\, \Psi }\left( J,\,\R\right)$ defined by
\begin{equation}\label{p}
p(t)=
\begin{cases}
r\, u(t),  ~\text{if} \,\,       r\,\mathcal{I}_{0+}^{1-\,\gamma;\,\Psi } u(T)\geq  \mathcal{I}_{0+}^{1-\,\gamma;\,\Psi } u(0)\\
r\, u(t)+\xi(t)\left( \mathcal{I}_{0+}^{1-\,\gamma;\,\Psi } u(0)- r\,\mathcal{I}_{0+}^{1-\,\gamma;\,\Psi } u(T)\right),   ~\text{if} \,\,     r\,\mathcal{I}_{0+}^{1-\,\gamma;\,\Psi } u(T) <  \mathcal{I}_{0+}^{1-\,\gamma;\,\Psi } u(0),
   \end{cases}
\end{equation}
and 
  \begin{equation}\label{q}
  q(t)=
   \begin{cases}
r\, v(t),   ~\text{if} \,\,     r\,\mathcal{I}_{0+}^{1-\,\gamma;\,\Psi } v(T)\leq  \mathcal{I}_{0+}^{1-\,\gamma;\,\Psi } v(0)\\
r\, v(t)-\xi(t) \left(r\,\mathcal{I}_{0+}^{1-\,\gamma;\,\Psi } v(T) - \mathcal{I}_{0+}^{1-\,\gamma;\,\Psi } v(0)  \right),    ~\text{if} \,\,     r\,\mathcal{I}_{0+}^{1-\,\gamma;\,\Psi } v(T) >  \mathcal{I}_{0+}^{1-\,\gamma;\,\Psi } v(0),
   \end{cases}
   \end{equation}
     where $\xi(t)$ is defined as in the equation \eqref{aa2.6a}.\\
Case 1:  If \, $ r\,\mathcal{I}_{0+}^{1-\,\gamma;\,\Psi } u(T)\geq  \mathcal{I}_{0+}^{1-\,\gamma;\,\Psi } u(0)$
then $p(t)=r\, u(t), \, t\in  (0,\,T]$. Therefore,
$$
\mathcal{I}_{0+}^{1-\,\gamma;\,\Psi } p(0)=r\, \mathcal{I}_{0+}^{1-\,\gamma;\,\Psi } u(0).
$$
Further,
$$
\mathcal{I}_{0+}^{1-\,\gamma;\,\Psi } p(T)=r\, \mathcal{I}_{0+}^{1-\,\gamma;\,\Psi } u(T)
\geq  \mathcal{I}_{0+}^{1-\,\gamma;\,\Psi } u(0)\\
=\frac{\mathcal{I}_{0+}^{1-\,\gamma;\,\Psi } p(0)}{r}.
$$
Thus, 
$$
r\,\mathcal{I}_{0+}^{1-\,\gamma;\,\Psi } p(T)\geq\mathcal{I}_{0+}^{1-\,\gamma;\,\Psi } p(0).
$$
Case 2: If \, $r\,\mathcal{I}_{0+}^{1-\,\gamma;\,\Psi } u(T) <  \mathcal{I}_{0+}^{1-\,\gamma;\,\Psi } u(0)$
then  $p(t)=r\, u(t)+\xi(t)\left( \mathcal{I}_{0+}^{1-\,\gamma;\,\Psi } u(0)- r\,\mathcal{I}_{0+}^{1-\,\gamma;\,\Psi } u(T)\right),$ $ \, t\in  (0,\,T]$. 
Therefore, 
\begin{equation} \label{aa2.6b}
\mathcal{I}_{0+}^{1-\,\gamma;\,\Psi } p(0)=r\,\mathcal{I}_{0+}^{1-\,\gamma;\,\Psi }  u(0)+\mathcal{I}_{0+}^{1-\,\gamma;\,\Psi } \xi(0)\left( \mathcal{I}_{0+}^{1-\,\gamma;\,\Psi } u(0)- r\,\mathcal{I}_{0+}^{1-\,\gamma;\,\Psi } u(T)\right).
\end{equation}
But from \eqref{aa2.6a} and Lemma \ref{lema2}(b), we have 
\begin{equation} \label{aa2.6c}
\mathcal{I}_{0+}^{1-\,\gamma;\,\Psi }\xi(t)=\frac{(\Psi(t)-\Psi(0))^{1-\,\gamma+\delta}}{(\Psi(T)-\Psi(0))^{1-\,\gamma+\delta}},\, t\in J.
\end{equation}
 From  \eqref{aa2.6b} and \eqref{aa2.6c}, it follows that 
$$
\mathcal{I}_{0+}^{1-\,\gamma;\,\Psi } p(0)=r\, \mathcal{I}_{0+}^{1-\,\gamma;\,\Psi } u(0).
$$ 
Further, 
\begin{equation} \label{aa2.6d}
\mathcal{I}_{0+}^{1-\,\gamma;\,\Psi } p(T)=r\, \mathcal{I}_{0+}^{1-\,\gamma;\,\Psi } u(T)+\mathcal{I}_{0+}^{1-\,\gamma;\,\Psi } \xi(T)\left( \mathcal{I}_{0+}^{1-\,\gamma;\,\Psi } u(0)- r\,\mathcal{I}_{0+}^{1-\,\gamma;\,\Psi } u(T)\right).
\end{equation}
Again from  \eqref{aa2.6c} and \eqref{aa2.6d}, it follows that
$$
\,\mathcal{I}_{0+}^{1-\,\gamma;\,\Psi } p(T)=\mathcal{I}_{0+}^{1-\,\gamma;\,\Psi } u(0)=\frac{\mathcal{I}_{0+}^{1-\gamma;\,\Psi } p(0)}{r}.
$$ 
Thus,
$$
r\,\mathcal{I}_{0+}^{1-\,\gamma;\,\Psi } p(T)= \mathcal{I}_{0+}^{1-\,\gamma;\,\Psi } p(0).
$$
From Case 1 and Case 2, we have
\begin{equation}\label{eq525}
\mathcal{I}_{0+}^{1-\,\gamma;\,\Psi } p(0)=r\, \mathcal{I}_{0+}^{1-\,\gamma;\,\Psi } u(0),
\end{equation}
\begin{equation}\label{eq526}
r\,\mathcal{I}_{0+}^{1-\,\gamma;\,\Psi } p(T)\geq\mathcal{I}_{0+}^{1-\,\gamma;\,\Psi } p(0).
\end{equation}
On the similar line for the  function $q$ one can obtain the following relations  
\begin{equation}\label{eq527}
\mathcal{I}_{0+}^{1-\,\gamma;\,\Psi } q(0)=r\, \mathcal{I}_{0+}^{1-\,\gamma;\,\Psi } v(0),
\end{equation}
\begin{equation}\label{eq528}
r\,\mathcal{I}_{0+}^{1-\,\gamma;\,\Psi } q(T)\leq\mathcal{I}_{0+}^{1-\,\gamma;\,\Psi } q(0).
\end{equation}
Next, our aim is to  prove that $p \in C_{1-\,\gamma ;\, \Psi }\left( J,\,\R\right)$ satisfies the following fractional differential inequality 
\begin{equation}\label{eq529}
^H \mathcal{D}^{\alpha,\,\beta\,;\, \Psi}_{0 +}p(t)- M p(t)\leq r\,g(t),~t\in(0,\,T].
\end{equation}

If  $ r\,\mathcal{I}_{0+}^{1-\,\gamma;\,\Psi } u(T)\geq  \mathcal{I}_{0+}^{1-\,\gamma;\,\Psi } u(0)$, then $a_u(t)=0,~t\in (0,\,T]$. Further,  $u$ is a lower solution of linear $\Psi$-Hilfer BVP   \eqref{eq513}-\eqref{eq514}. Therefore, we have
\begin{align*}
  ^H \mathcal{D}^{\alpha,\,\beta\,;\, \Psi}_{0 +}p(t)- M p(t)&= r \left[  ^H \mathcal{D}^{\alpha,\,\beta\,;\, \Psi}_{0 +} u(t)- M  u(t)\right] \\
        &\leq r\left[ g(t)-a_u(t) \right] \\
      &= r\,g(t),\, t\in  (0,\,T].
\end{align*}
If $r\,\mathcal{I}_{0+}^{1-\,\gamma;\,\Psi } u(T) <  \mathcal{I}_{0+}^{1-\,\gamma;\,\Psi } u(0), $ then
\begin{align*}
 & ^H \mathcal{D}^{\alpha,\,\beta\,;\, \Psi}_{0 +}p(t)- M p(t)\\
  &=\,^H \mathcal{D}^{\alpha,\,\beta\,;\, \Psi}_{0 +} \left[ r\, u(t)+\xi(t)\left( \mathcal{I}_{0+}^{1-\,\gamma;\,\Psi } u(0)- r\,\mathcal{I}_{0+}^{1-\,\gamma;\,\Psi } u(T)\right)\right] \\
  &\qquad- M  \left[ r\, u(t)+\xi(t)\left( \mathcal{I}_{0+}^{1-\,\gamma;\,\Psi } u(0)- r\,\mathcal{I}_{0+}^{1-\,\gamma;\,\Psi } u(T)\right)\right] \\
    &= r\left[  ^H \mathcal{D}^{\alpha,\,\beta\,;\, \Psi}_{0 +}  u(t)- M u(t)\right] \\
   & \qquad + \left[  ^H \mathcal{D}^{\alpha,\,\beta\,;\, \Psi}_{0 +} \xi(t)- M\xi(t)\right] \left( \mathcal{I}_{0+}^{1-\,\gamma;\,\Psi } u(0)- r\,\mathcal{I}_{0+}^{1-\,\gamma;\,\Psi } u(T)\right),\, t\in  (0,\,T].
\end{align*}
Using the definition of $a_u(t)$ and the fact that  $u$ is a lower solution of linear $\Psi$-Hilfer  BVP   \eqref{eq513}-\eqref{eq514} from above equation, we obtain 
 $$
 ^H \mathcal{D}^{\alpha,\,\beta\,;\, \Psi}_{0 +}p(t)- M p(t)\leq r\left[ g(t)-a_u(t) \right]+ r\,a_u(t)= r\,g(t),\, t\in  (0,\,T].
 $$
In both cases \, $ r\,\mathcal{I}_{0+}^{1-\,\gamma;\,\Psi } u(T)\geq  \mathcal{I}_{0+}^{1-\,\gamma;\,\Psi } u(0)$ and \,$r\,\mathcal{I}_{0+}^{1-\,\gamma;\,\Psi } u(T) <  \mathcal{I}_{0+}^{1-\,\gamma;\,\Psi } u(0), $ we have proved that the function $p\in C_{1-\,\gamma ;\, \Psi }\left( J,\,\R\right)$ defined in equation \eqref{p} satisfies the fractional differential inequality  \eqref{eq529}. On the similar line, one can prove that the function $q\in C_{1-\,\gamma ;\, \Psi }\left( J,\,\R\right)$ defined in equation \eqref{q} satisfies the following fractional differential inequality 
\begin{equation}\label{eq530}
^H \mathcal{D}^{\alpha,\,\beta\,;\, \Psi}_{0 +}q(t)- M q(t)\geq r\,g(t),~t\in(0,\,T].
\end{equation}
Define $\sigma(t)=p(t)-q(t), \,t\in (0,\,T].$  Then, $\sigma\in C_{1-\,\gamma ;\, \Psi }\left( J,\,\R\right)$.  By using fractional differential inequalities \eqref{eq529} and \eqref{eq530}, we obtain
\begin{align*}
^H \mathcal{D}^{\alpha,\,\beta\,;\, \Psi}_{0 +} \sigma(t)&=\, ^H \mathcal{D}^{\alpha,\,\beta\,;\, \Psi}_{0 +} p(t)-\, ^H \mathcal{D}^{\alpha,\,\beta\,;\, \Psi}_{0 +}q(t)\\
&\leq \left[r\,g(t) +M\,p(t)\right] -\left[ r\,g(t) +M \,q(t)\right] \\
&=M\, \sigma(t),\, t\in(0,\,T].
\end{align*}
Further, by using equations \eqref{eq525} and \eqref{eq527} and hypothesis, we obtain
\begin{align*}
\mathcal{I}_{0+}^{1-\,\gamma;\,\Psi } \sigma(0)
&=\mathcal{I}_{0+}^{1-\,\gamma;\,\Psi } p(0)-\mathcal{I}_{0+}^{1-\,\gamma;\,\Psi } q(0)\\
&=r\left[ \mathcal{I}_{0+}^{1-\,\gamma;\,\Psi } u(0)-\mathcal{I}_{0+}^{1-\,\gamma;\,\Psi } v(0)\right] \\
&\leq 0.
\end{align*}
We have proved that $\sigma\in C_{1-\,\gamma ;\, \Psi }\left( J,\,\R\right)$ satisfies
\begin{align*}
\begin{cases}
&^H \mathcal{D}^{\alpha,\,\beta\,;\, \Psi}_{0 +} \sigma(t)-M\, \sigma(t)\leq 0,\, t\in(0,\,T],\\
&\mathcal{I}_{0+}^{1-\,\gamma;\,\Psi } \sigma(0)\leq 0.
\end{cases}
\end{align*}
By applying Theorem \ref{lem25}, we obtain $\sigma(t)\leq 0,\, t\in (0,\,T].$ This gives 
$$
p(t)\leq q(t),\, t\in (0,\,T].
$$
Next, for any $\lambda\in\R$,  consider the following linear   $\Psi$-Hilfer FDEs subject to initial condition
\begin{align}\label{eq531}
\begin{cases}
&^H \mathcal{D}^{\alpha,\,\beta\,;\, \Psi}_{0 +} y(t)-M y(t)=g(t),\, t\in  (0,\,T],\\
&\mathcal{I}_{0+}^{1-\,\gamma;\,\Psi } y(t)|_{t=0}= \lambda.
\end{cases}
\end{align}
Then, by  Lemma \ref{lin1}, it has a unique  solution in $C_{1-\,\gamma ;\, \Psi }\left( J,\,\R\right)$ given by
\begin{align}\label{eq531a}
y(t, \lambda)&=\lambda\, (\Psi(t)-\Psi(0))^{\gamma-1}E_{\alpha,\,\gamma}\left(M (\Psi(t)-\Psi(0))^\alpha \right)\nonumber\\
&+\int_{0}^{t} \Psi'(s)  (\Psi(t)-\Psi(s))^{\alpha-1}  E_{\alpha,\,\alpha}\left(M (\Psi(t)-\Psi(s))^\alpha \right) g(s) ds, \, t\in  (0,\,T].
\end{align}
Since  $g \in C_{1-\,\gamma ;\, \Psi }\left( J,\,\R\right)$, the function $(\Psi(\cdot)-\Psi(0))^{1-\,\gamma}y(\cdot, \lambda)$ is continuous  on $J$ for each $\lambda\in \R$.\\
Define $\chi(t)=p(t)-r\, y(t, \lambda), \, t\in  (0,\,T],$ where $y(t, \lambda)$ is a solution of \eqref{eq531}. Then, $\chi\in C_{1-\,\gamma ;\, \Psi }\left( J,\,\R\right)$. Take any $\lambda\in \R$ such that 
\begin{equation}\label{am55}
\mathcal{I}_{0+}^{1-\,\gamma;\,\Psi } p(T)\leq \lambda\leq\mathcal{I}_{0+}^{1-\,\gamma;\,\Psi } q(T).
\end{equation}
  From equations \eqref{eq529} and \eqref{eq531}, we have
\begin{align*}
^H \mathcal{D}^{\alpha,\,\beta\,;\, \Psi}_{0 +} \chi(t)-M\, \chi(t)&=\,^H \mathcal{D}^{\alpha,\,\beta\,;\, \Psi}_{0 +} \left[ p(t)-r\, y(t, \lambda)\right] -M\,\left[ p(t)-r\, y(t, \lambda)\right]\\
&=\left[ ^H \mathcal{D}^{\alpha,\,\beta\,;\, \Psi}_{0 +}  p(t) - M\,p(t)\right] -r \left[ ^H \mathcal{D}^{\alpha,\,\beta\,;\, \Psi}_{0 +} y(t, \lambda) - M\,y(t, \lambda)\right] \\
&\leq r\,g(t)-  r\,g(t)=0, \, t\in  (0,\,T].
\end{align*}
Further, by using inequalities \eqref{eq526} and \eqref{am55} and initial condition in  \eqref{eq531}, we obtain
\begin{align*}
\mathcal{I}_{0+}^{1-\,\gamma;\,\Psi } \chi(0)
&=\mathcal{I}_{0+}^{1-\,\gamma;\,\Psi } p(0)-r \mathcal{I}_{0+}^{1-\,\gamma;\,\Psi } y(t, \lambda)|_{t=0}\\
&\leq r \left[\mathcal{I}_{0+}^{1-\,\gamma;\,\Psi }p(T)- \lambda\right]\\ 
&\leq 0.
\end{align*}
Therefore, $\chi\in C_{1-\,\gamma ;\, \Psi }\left( J,\,\R\right)$ satisfies
\begin{align*}
\begin{cases}
^H \mathcal{D}^{\alpha,\,\beta\,;\, \Psi}_{0 +} \chi(t)-M\, \chi(t)\leq 0,\, t\in(0,\,T],\\
\mathcal{I}_{0+}^{1-\,\gamma;\,\Psi } \chi(0)\leq 0.
\end{cases}
\end{align*}
By  applying Theorem \ref{lem25}, we obtain $\chi(t)\leq 0,\, t\in (0,\,T].$ This implies $p(t)\leq r\, y(t, \lambda),\, t\in (0,\,T]$. On the similar line one can prove that $ r\, y(t, \lambda)\leq q(t),\, t\in (0,\,T]$.  Therefore,
$$
p(t)\leq r\, y(t, \lambda)\leq q(t),\, t\in (0,\,T].
$$
Since $\Psi$-Riemann-Liouville fractional integral operator $\mathcal{I}_{0+}^{1-\,\gamma;\,\Psi }$ is monotonic, from above inequalities,  we obtain 
$$
\mathcal{I}_{0+}^{1-\,\gamma;\,\Psi }p(t)\leq r\,\mathcal{I}_{0+}^{1-\,\gamma;\,\Psi } y(t, \lambda)\leq \mathcal{I}_{0+}^{1-\,\gamma;\,\Psi }q(t),\, t\in (0,\,T].
$$ 
Therefore, we can write
\begin{equation}\label{eq522b}
\mathcal{I}_{0+}^{1-\,\gamma;\,\Psi }p(T)\leq r\,\mathcal{I}_{0+}^{1-\,\gamma;\,\Psi } y(T, \lambda)\leq \mathcal{I}_{0+}^{1-\,\gamma;\,\Psi }q(T),
\end{equation}
for any $\lambda\in\left[\mathcal{I}_{0+}^{1-\,\gamma;\,\Psi }p(T),\,\mathcal{I}_{0+}^{1-\,\gamma;\,\Psi }q(T) \right].$

Define
\begin{equation}\label{eq522a}
g(\lambda)= r\,\mathcal{I}_{0+}^{1-\,\gamma;\,\Psi } y(T, \lambda)-\lambda,\,\,\lambda\in\left[\mathcal{I}_{0+}^{1-\,\gamma;\,\Psi }p(T),\,\mathcal{I}_{0+}^{1-\,\gamma;\,\Psi }q(T) \right].
\end{equation}
Since two parameter Mittag-Leffler function is analytic, using it's series representation the equation \eqref{eq531a} can be written as
\begin{align*}
y(t, \lambda)
&=\lambda\, (\Psi(t)-\Psi(0))^{\gamma-1}\sum_{k=0}^{\infty}\frac{\left(M (\Psi(t)-\Psi(0))^\alpha \right)^k}{\Gamma(k\alpha+\gamma)}\\
&\qquad+\int_{0}^{t} \Psi'(s)  (\Psi(t)-\Psi(s))^{\alpha-1}  \sum_{k=0}^{\infty}\frac{\left(M (\Psi(t)-\Psi(s))^\alpha \right)^k}{\Gamma(k\alpha+\alpha)}g(s) ds\\
&=\lambda\sum_{k=0}^{\infty}\frac{M^k}{\Gamma(k\alpha+\gamma)}\left( \Psi(t)-\Psi(0)\right)^{k\alpha+\gamma-1}\\
&\qquad+\sum_{k=0}^{\infty}\frac{M^k}{\Gamma(\alpha(k+1))}\int_{0}^{t} \Psi'(s)  (\Psi(t)-\Psi(s))^{\alpha(k+1)-1}  g(s) ds\\
&=\lambda\sum_{k=0}^{\infty}\frac{M^k}{\Gamma(k\alpha+\gamma)}\left( \Psi(t)-\Psi(0)\right)^{k\alpha+\gamma-1}+\sum_{k=0}^{\infty}M^k\,\mathcal{I}_{0+}^{\alpha(k+1); \Psi } g(t),\, t\in (0,\,T].
\end{align*}
Using Lemma \ref{lema2}, from above equation, we obtain
\begin{align*}
&\mathcal{I}_{0+}^{1-\,\gamma;\,\Psi }y(t, \lambda)\\
&=\lambda\sum_{k=0}^{\infty}\frac{M^k}{\Gamma(k\alpha+\gamma)}\,\mathcal{I}_{0+}^{1-\,\gamma;\,\Psi }\left(\Psi(t)-\Psi(0)\right)^{k\alpha+\gamma-1}+\sum_{k=0}^{\infty}M^k\,\mathcal{I}_{0+}^{1-\,\gamma;\,\Psi }\mathcal{I}_{0+}^{\alpha(k+1); \Psi } g(t)\\
&=\lambda\sum_{k=0}^{\infty}\frac{M^k}{\Gamma(k\alpha+\gamma)}\frac{\Gamma(k\alpha+\gamma)}{\Gamma(k\alpha+\gamma+1-\,\gamma)}\left(\Psi(t)-\Psi(0)\right)^{k\alpha}+\sum_{k=0}^{\infty}M^k\,\mathcal{I}_{0+}^{1-\,\gamma+\alpha(k+1); \Psi } g(t)\\
&=\lambda\sum_{k=0}^{\infty}\frac{M^k}{\Gamma(k\alpha+1)}\left(\Psi(t)-\Psi(0)\right)^{k\alpha}+\sum_{k=0}^{\infty}M^k\,\mathcal{I}_{0+}^{1-\,\gamma+\alpha(k+1); \Psi } g(t)\\
&=\lambda\, E_{\alpha, 1}\left( M\left(\Psi(t)-\Psi(0)\right)^{\alpha}\right) \\
&\qquad+\int_{0}^{t} \Psi'(s)  (\Psi(t)-\Psi(s))^{\alpha-\gamma}  \sum_{k=0}^{\infty}\frac{\left(M (\Psi(t)-\Psi(s))^\alpha \right)^k}{\Gamma(k\alpha+\alpha+1-\,\gamma)}g(s) ds\\
&=\lambda\, E_{\alpha, 1}\left( M\left(\Psi(t)-\Psi(0)\right)^{\alpha}\right) \\
&\qquad+\int_{0}^{t} \Psi'(s)  (\Psi(t)-\Psi(s))^{\alpha-\gamma}  E_{\alpha,\,\alpha+1-\,\gamma}\left( M\left(\Psi(t)-\Psi(s)\right)^{\alpha}\right) g(s) ds,\, t\in (0,\,T].
\end{align*}
Therefore,
\begin{align}\label{eq534}
&\mathcal{I}_{0+}^{1-\,\gamma;\,\Psi }y(t, \lambda)|_{t=T} \nonumber \\
&=\lambda\, E_{\alpha, 1}\left( M\left(\Psi(T)-\Psi(0)\right)^{\alpha}\right) \nonumber\\
&\qquad+\int_{0}^{T} \Psi'(s)  (\Psi(T)-\Psi(s))^{\alpha-\gamma}  E_{\alpha,\,\alpha+1-\,\gamma}\left( M\left(\Psi(T)-\Psi(s)\right)^{\alpha}\right) g(s) ds.
\end{align}
Using equation \eqref{eq534} in equation \eqref{eq522a}, we get
\begin{align*}
g(\lambda)&=r\left\lbrace \lambda\, E_{\alpha, 1}\left( M\left(\Psi(T)-\Psi(0)\right)^{\alpha}\right)\right. \\
&\qquad\left.+\int_{0}^{T} \Psi'(s)  (\Psi(T)-\Psi(s))^{\alpha-\gamma}  E_{\alpha,\,\alpha+1-\,\gamma}\left( M\left(\Psi(T)-\Psi(s)\right)^{\alpha}\right) g(s) ds\right\rbrace - \lambda.
\end{align*}
Differentiating above equation with respect to $\lambda$ and using the condition on $r$, we obtain
$$
g'(\lambda)=r\, E_{\alpha, 1}\left( M\left(\Psi(T)-\Psi(0)\right)^{\alpha}\right)-1<0.
$$
This implies $g$ is strictly decreasing function on the closed interval $\left[\mathcal{I}_{0+}^{1-\,\gamma;\,\Psi }p(T),\,\mathcal{I}_{0+}^{1-\,\gamma;\,\Psi }q(T) \right]$. Therefore, we have
$$
g\left( \mathcal{I}_{0+}^{1-\,\gamma;\,\Psi }p(T)\right) >g\left( \mathcal{I}_{0+}^{1-\,\gamma;\,\Psi }q(T) \right).
$$
 Next, we show that the equation $g(\lambda)=0$ has atmost one solution on $\R$. Using equation \eqref{eq522b},  we obtain
\begin{align}\label{ak1}
g\left( \mathcal{I}_{0+}^{1-\,\gamma;\,\Psi }q(T)\right) =\left[ r\,\mathcal{I}_{0+}^{1-\,\gamma;\,\Psi } y(T,\, \mathcal{I}_{0+}^{1-\,\gamma;\,\Psi }q(T))- \mathcal{I}_{0+}^{1-\,\gamma;\,\Psi }q(T) \right] \leq 0,
\end{align}
and 
\begin{align}\label{ak2}
g\left( \mathcal{I}_{0+}^{1-\,\gamma;\,\Psi }p(T)\right)=\left[ r\,\mathcal{I}_{0+}^{1-\,\gamma;\,\Psi } y(T,\, \mathcal{I}_{0+}^{1-\,\gamma;\,\Psi }p(T))- \mathcal{I}_{0+}^{1-\,\gamma;\,\Psi }p(T) \right] \geq 0.
\end{align}
Since $g$ is continuous on the closed interval $\left[\mathcal{I}_{0+}^{1-\,\gamma;\,\Psi }p(T),\,\mathcal{I}_{0+}^{1-\,\gamma;\,\Psi }q(T) \right]$ and satisfies the conditions \eqref{ak1} and \eqref{ak2}, using the location of root theorem given in Lemma \ref{lin3} coupled with strictly decreasing nature of $g$, there exist at most one  $\lambda_0\in\left[\mathcal{I}_{0+}^{1-\,\gamma;\,\Psi }p(T),\,\mathcal{I}_{0+}^{1-\,\gamma;\,\Psi }q(T) \right]$ such that
\begin{equation}\label{ak3}
g(\lambda_0)=0.
\end{equation}
From equations \eqref{eq531}, \eqref{eq522a} and \eqref{ak3}, we obtain
\begin{equation}\label{eq5111}
r\,\mathcal{I}_{0+}^{1-\,\gamma;\,\Psi }y(t, \lambda_0)|_{t=T}=\lambda_0=\mathcal{I}_{0+}^{1-\,\gamma;\,\Psi }y(t)|_{t=0}.
\end{equation}
From equations \eqref{eq531} and \eqref{eq531a}, it follows that 
\begin{align}\label{eq5112}
y(t, \lambda_0)
&=\lambda_0\, (\Psi(t)-\Psi(0))^{\gamma-1}E_{\alpha,\,\gamma}\left(M (\Psi(t)-\Psi(0))^\alpha \right)\nonumber\\
&\qquad+\int_{0}^{t} \Psi'(s)  (\Psi(t)-\Psi(s))^{\alpha-1}  E_{\alpha,\,\alpha}\left(M (\Psi(t)-\Psi(s))^\alpha \right) g(s) ds
\end{align}
is the unique solution of linear   $\Psi$-Hilfer FDEs with  initial condition
\begin{align}\label{am551}
\begin{cases}
&^H \mathcal{D}^{\alpha,\,\beta\,;\, \Psi}_{0 +} y(t)-M y(t)=g(t),\, t\in  (0,\,T],\\
&\mathcal{I}_{0+}^{1-\,\gamma;\,\Psi } y(t)|_{t=0}= \lambda_0.
\end{cases}
\end{align}
Further, from \eqref{eq5111} and \eqref{am551}  it follows that, $y(t, \lambda_0)$ given in \eqref{eq5112} is the unique solution of linear $\Psi$-Hilfer BVP  \eqref{eq513}-\eqref{eq514}.\\
\textbf{Part II:} In this part we prove that the unique solution $y(t, \lambda_0)$ obtained in the first part satisfies 
$$
u\preceq y(\cdot, \lambda_0)\preceq v~ \text{ in}~ C_{1-\,\gamma ;\, \Psi }\left( J,\,\R\right).
$$
Define $h(t)=u(t)-y(t, \lambda_0),\, t\in (0,\,T]$. Clearly $h\in C_{1-\,\gamma ;\, \Psi }\left( J,\,\R\right)$.\\
Case 1: If $ r\,\mathcal{I}_{0+}^{1-\,\gamma;\,\Psi } u(T)\geq  \mathcal{I}_{0+}^{1-\,\gamma;\,\Psi } u(0)$, then $a_u(t)=0,\,t\in(0,\,T]$.  Since $u$ is a lower solution of the linear $\Psi$-Hilfer BVP  \eqref{eq513}-\eqref{eq514} and $y(t, \lambda_0)$ is the solution of linear $\Psi$-Hilfer FDEs   \eqref{am551}, we obtain
\begin{align*}
^H \mathcal{D}^{\alpha,\,\beta\,;\, \Psi}_{0 +} h(t)-M\, h(t)&=\,^H \mathcal{D}^{\alpha,\,\beta\,;\, \Psi}_{0 +} \left[u(t)- y(t, \lambda_0)\right] -M\,\left[ u(t)- y(t, \lambda_0)\right]\\
&=\left[ ^H \mathcal{D}^{\alpha,\,\beta\,;\, \Psi}_{0 +}  u(t) - M\,u(t)\right] - \left[ ^H \mathcal{D}^{\alpha,\,\beta\,;\, \Psi}_{0 +} y(t, \lambda_0) - M\,y(t, \lambda_0)\right] \\
&\leq \,g(t)-a_u(t)-  g(t)\\
&=0, \, t\in  (0,\,T].
\end{align*}
Further, using the equations \eqref{eq525} and \eqref{eq526} and initial condition in \eqref{am551}, we have 
\begin{align*}
\mathcal{I}_{0+}^{1-\,\gamma;\,\Psi } h(0)
&=\mathcal{I}_{0+}^{1-\,\gamma;\,\Psi } u(0)- \mathcal{I}_{0+}^{1-\,\gamma;\,\Psi } y(t, \lambda_0)|_{t=0}\\
&=\frac{\mathcal{I}_{0+}^{1-\,\gamma;\,\Psi }p(0)}{r}- \mathcal{I}_{0+}^{1-\,\gamma;\,\Psi } y(t, \lambda_0)|_{t=0}\\
&\leq \, \mathcal{I}_{0+}^{1-\,\gamma;\,\Psi }p(T)- \,\lambda_0\\
&\leq 0.
\end{align*}
Applying Theorem \ref{lem25} to the fractional inequalities 
\begin{align*}
\begin{cases}
&^H \mathcal{D}^{\alpha,\,\beta\,;\, \Psi}_{0 +} h(t)-M\, h(t)\leq 0, \, t\in  (0,\,T],\\
&\mathcal{I}_{0+}^{1-\,\gamma;\,\Psi } h(0) \leq 0,
\end{cases}
\end{align*}
we obtain $h(t)\leq 0,\,t\in (0,\,T].$ This gives, $u(t)\leq y(t, \lambda_0),\,t\in (0,\,T].$ \\
Case 2: If $ r\,\mathcal{I}_{0+}^{1-\,\gamma;\,\Psi } u(T)< \mathcal{I}_{0+}^{1-\,\gamma;\,\Psi } u(0)$ then $a_u(t)\neq0,\,t\in (0,\,T]$ as defined in \eqref{aa2.6}. Since $u$ is a lower solution of the linear $\Psi$-Hilfer  BVP  \eqref{eq513}-\eqref{eq514} and $y(t, \lambda_0)$ is the solution of linear $\Psi$-Hilfer FDEs   \eqref{am551}, we obtain
\begin{align*}
^H \mathcal{D}^{\alpha,\,\beta\,;\, \Psi}_{0 +} h(t)-M\, h(t)&=\,^H \mathcal{D}^{\alpha,\,\beta\,;\, \Psi}_{0 +} \left[u(t)- y(t, \lambda_0)\right] -M\,\left[ u(t)- y(t, \lambda_0)\right]\\
&=\left[ ^H \mathcal{D}^{\alpha,\,\beta\,;\, \Psi}_{0 +}  u(t) - M\,u(t)\right] - \left[ ^H \mathcal{D}^{\alpha,\,\beta\,;\, \Psi}_{0 +} y(t, \lambda_0) - M\,y(t, \lambda_0)\right] \\
&\leq \,g(t)-a_u(t)-  g(t)\\
&=-a_u(t), \, t\in  (0,\,T].
\end{align*}
Further, we have
$
\mathcal{I}_{0+}^{1-\,\gamma;\,\Psi } h(0)\leq 0.
$
By applying Theorem \ref{lem26} to the fractional inequalities 
\begin{align*}
\begin{cases}
&^H \mathcal{D}^{\alpha,\,\beta\,;\, \Psi}_{0 +} h(t)-M\, h(t)\leq \,g(t)-a_u(t)-  g(t)=-a_u(t), \, t\in  (0,\,T],\\
&\mathcal{I}_{0+}^{1-\,\gamma;\,\Psi } h(0)\leq 0,
\end{cases}
\end{align*}
we have $h(t)\leq 0,\,t\in (0,\,T].$ This implies, $u(t)\leq y(t, \lambda_0),\,t\in (0,\,T].$ From Case 1 and Case 2, it follows that 
$$
u(t)\leq y(t, \lambda_0),\,t\in (0,\,T].
$$ 
Following the similar approach, one can show that 
$$
v(t)\geq  y(t, \lambda_0),\,t\in (0,\,T].
$$
Therefore, we have
$$
u(t)\leq y(t, \lambda_0)\leq v(t),\,t\in (0,\,T].
$$
By Part I and Part II, it follows that $y(\cdot, \lambda_0)\in  C_{1-\,\gamma ;\, \Psi }\left( J,\,\R\right)$ is the unique solution  of the linear $\Psi$-Hilfer  BVP \eqref{eq513}-\eqref{eq514}  that satisfies the condition
$$
u\preceq y(\cdot, \lambda_0)\preceq v~ \text{ in}~ C_{1-\,\gamma ;\, \Psi }\left( J,\,\R\right).
$$
This completes the proof.
 \end{proof}
\section{Existence and uniqueness for the nonlinear $\Psi$-Hilfer BVP}
\begin{definition}
 We say that $w_0\in C_{1-\,\gamma ;\, \Psi }\left( J,\,\R\right)$ and $ z_0\in C_{1-\,\gamma ;\, \Psi }\left( J,\,\R\right)$ are the lower  and  upper solutions respectively of the nonlinear $\Psi$-Hilfer BVP \eqref{eq511}-\eqref{eq512}  if 
$$
^H \mathcal{D}^{\alpha,\,\beta\,;\, \Psi}_{0 +}w_0(t)- M w_0(t)\leq f(t, w_0(t))  - a_{w_0}(t), ~t \in  (0,\,T],
$$
and
$$
    ^H \mathcal{D}^{\alpha,\,\beta\,;\, \Psi}_{0 +}z_0(t)- M z_0(t) \geq f(t, z_0(t)) + b_{z_0}(t), ~t \in  (0,\,T],
      $$
 where
 \begin{equation*}
     a_{w_0}(t)=
     \begin{cases}
   0, ~\text{if} \,\,  r\,\mathcal{I}_{0+}^{1-\,\gamma;\,\Psi } w_0(T)\geq  \mathcal{I}_{0+}^{1-\,\gamma;\,\Psi } w_0(0)\\
   \frac{1}{r}\left(  ^H \mathcal{D}^{\alpha,\,\beta\,;\, \Psi}_{0 +} \xi(t)- M\xi(t) \right) \left( \mathcal{I}_{0+}^{1-\,\gamma;\,\Psi } w_0(0)- r\,\mathcal{I}_{0+}^{1-\,\gamma;\,\Psi } w_0(T)\right),\\
   ~\text{if} \,\,r\,\mathcal{I}_{0+}^{1-\,\gamma;\,\Psi } w_0(T) <  \mathcal{I}_{0+}^{1-\,\gamma;\,\Psi } w_0(0),
     \end{cases}
       \end{equation*}
 and      
        \begin{equation*}
          b_{z_0}(t)=
          \begin{cases}
        0,  ~\text{if} \,\,    r\,\mathcal{I}_{0+}^{1-\,\gamma;\,\Psi } z_0(T)\leq  \mathcal{I}_{0+}^{1-\,\gamma;\,\Psi } z_0(0)\\
        \frac{1}{r}\left(  ^H \mathcal{D}^{\alpha,\,\beta\,;\, \Psi}_{0 +} \xi(t)- M\xi(t) \right) \left(r\,\mathcal{I}_{0+}^{1-\,\gamma;\,\Psi } z_0(T) - \mathcal{I}_{0+}^{1-\,\gamma;\,\Psi } z_0(0)  \right),\\
         ~\text{if} \,\, r\,\mathcal{I}_{0+}^{1-\,\gamma;\,\Psi } z_0(T) >  \mathcal{I}_{0+}^{1-\,\gamma;\,\Psi } z_0(0),
          \end{cases}
          \end{equation*}
          and $\xi$ is the function as defined in \eqref{aa2.6a}.
 \end{definition}
 
 \begin{theorem}\label{thm541}
 Assume $z_0\in C_{1-\,\gamma ;\, \Psi }\left( J,\,\R\right)$ and $ w_0\in C_{1-\,\gamma ;\, \Psi }\left( J,\,\R\right)$  are the upper and lower solutions respectively of the nonlinear $\Psi$-Hilfer  BVP \eqref{eq511}-\eqref{eq512}  such that $w_0\preceq z_0$.  Further, assume that:\\
 (H1) the function $f$ satisfies 
 \begin{enumerate}
 \item [(i)]  $f\left( \cdot, y(\cdot)\right) \in C_{1-\,\gamma ;\, \Psi }\left( J,\,\R\right)$ for each $y\in C_{1-\,\gamma ;\, \Psi }\left( J,\,\R\right)$, 
 \item [(ii)] 
$
f(t,\,y_1) \leq f(t,\,y_2),~ \text{for any } y_1,\,y_2\in \R \text{ with } y_1\leq y_2~~ \text{and}~~t\in (0,\,T]. 
$
 \end{enumerate}
Then, the nonlinear $\Psi$-Hilfer BVP \eqref{eq511}-\eqref{eq512} has a minimal solution $w^*\in C_{1-\,\gamma ;\, \Psi }\left( J,\,\R\right)$ and a maximal solution $z^*\in C_{1-\,\gamma ;\, \Psi }\left( J,\,\R\right)$ in the line segment
 $$
 [w_0,\,z_0]=\{y\in C_{1-\,\gamma ;\, \Psi }\left( J,\,\R\right):w_0 \preceq y \preceq z_0 \}.
 $$
 Further, if $\{w_n\}_{n=1}^\infty$  and $\{z_n\}_{n=1}^\infty$ are the iterative sequences defined by 
\begin{align*}
 w_n(t)&=\frac{r\,(\Psi(t)-\Psi(0))^{\gamma-1}E_{\alpha,\,\gamma}\left(M (\Psi(t)-\Psi(0))^\alpha \right) }{1-r\,E_{\alpha,\,1}\left(M (\Psi(T)-\Psi(0))^\alpha \right) }\\
 &\qquad \times \int_{0}^{T} \Psi'(s)  (\Psi(T)-\Psi(s))^{\alpha-\gamma}  E_{\alpha,\,\alpha+1-\,\gamma}\left(M (\Psi(T)-\Psi(s))^\alpha \right) f(s,\,w_{n-1}(s)) ds\\
 &\qquad+\int_{0}^{t} \Psi'(s)  (\Psi(t)-\Psi(s))^{\alpha-1}  E_{\alpha,\,\alpha}\left(M (\Psi(t)-\Psi(s))^\alpha \right) f(s,\,w_{n-1}(s)) ds
\end{align*}
and
\begin{align*}
 z_n(t)&=\frac{r\,(\Psi(t)-\Psi(0))^{\gamma-1}E_{\alpha,\,\gamma}\left(M (\Psi(t)-\Psi(0))^\alpha \right) }{1-r\,E_{\alpha,\,1}\left(M (\Psi(T)-\Psi(0))^\alpha \right) }\\
 &\qquad \times\int_{0}^{T} \Psi'(s)  (\Psi(T)-\Psi(s))^{\alpha-\gamma}  E_{\alpha,\,\alpha+1-\,\gamma}\left(M (\Psi(T)-\Psi(s))^\alpha \right) f(s,\,z_{n-1}(s)) ds\\
 &\qquad+\int_{0}^{t} \Psi'(s)  (\Psi(t)-\Psi(s))^{\alpha-1}  E_{\alpha,\,\alpha}\left(M (\Psi(t)-\Psi(s))^\alpha \right) f(s,\,z_{n-1}(s)) ds
 \end{align*}
 then $\{w_n\}_{n=1}^\infty$  and $\{z_n\}_{n=1}^\infty$ are the monotonic sequences in $C_{1-\,\gamma ;\, \Psi }\left( J,\,\R\right)$ such that
\begin{align*}
& \lim\limits_{n\rightarrow\infty}\left\Vert w_n-w^*\right\Vert _{C_{1-\,\gamma ;\Psi }\left(  J, \R\right)  }=0
\end{align*}
 and
\begin{align*}
 &\lim\limits_{n\rightarrow\infty}\left\Vert z_n-z^*\right\Vert _{C_{1-\,\gamma ;\Psi }\left(  J, \R\right)  }=0.
\end{align*}
 \end{theorem}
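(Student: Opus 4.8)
The plan is to recast the two iterations as orbits of a single order-preserving operator and then run the classical monotone iterative scheme, using Theorem~\ref{lem531} and Theorem~\ref{lem25} as the comparison engine. For $y\in C_{1-\,\gamma ;\, \Psi }\left( J,\,\R\right)$ let $Ay$ denote the unique solution of the linear non-homogeneous $\Psi$-Hilfer BVP \eqref{eq513}--\eqref{eq514} with $g$ replaced by $f(\cdot,y(\cdot))$; by (H1)(i) this forcing lies in $C_{1-\,\gamma ;\, \Psi }\left( J,\,\R\right)$, so $Ay$ makes sense. Indeed, writing the IVP solution $y(t,\lambda)$ from Lemma~\ref{lin1} and imposing the boundary relation $\lambda=r\,\mathcal{I}_{0+}^{1-\,\gamma;\,\Psi}y(T,\lambda)$ leads, exactly as in the computation producing \eqref{eq534}, to a single affine equation in $\lambda$ with slope $r\,E_{\alpha,1}\!\left(M(\Psi(T)-\Psi(0))^\alpha\right)-1<0$; hence it has a unique root $\lambda_0$, and substituting $\lambda_0$ back into $y(t,\lambda)$ gives precisely the closed formula displayed for $w_n$ (resp. $z_n$) with forcing $f(\cdot,w_{n-1})$ (resp. $f(\cdot,z_{n-1})$). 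Thus $w_n=A w_{n-1}$ and $z_n=A z_{n-1}$.

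Next I would prove two structural facts about $A$. First, $A$ is order preserving: if $y_1\preceq y_2$ then $\phi:=Ay_1-Ay_2$ satisfies $^H\mathcal{D}^{\alpha,\beta;\Psi}_{0+}\phi-M\phi=f(\cdot,y_1)-f(\cdot,y_2)\le 0$ by (H1)(ii), while the boundary relations for $Ay_1,Ay_2$ give $\mathcal{I}_{0+}^{1-\gamma;\Psi}\phi(0)=r\,\mathcal{I}_{0+}^{1-\gamma;\Psi}\phi(T)$. Evaluating the representation of $\phi$ at $t=T$ and using that the integral term is $\le0$ yields $\mathcal{I}_{0+}^{1-\gamma;\Psi}\phi(0)\le r\,E_{\alpha,1}\!\left(M(\Psi(T)-\Psi(0))^\alpha\right)\mathcal{I}_{0+}^{1-\gamma;\Psi}\phi(0)$, which together with $r<1/E_{\alpha,1}\!\left(M(\Psi(T)-\Psi(0))^\alpha\right)$ forces $\mathcal{I}_{0+}^{1-\gamma;\Psi}\phi(0)\le0$; Theorem~\ref{lem25} then gives $\phi\le0$, i.e. $Ay_1\preceq Ay_2$. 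Second, $A$ maps $[w_0,z_0]$ into itself: for $y\in[w_0,z_0]$, monotonicity of $f$ shows that $w_0$ is a lower solution and $z_0$ an upper solution of the linear BVP \eqref{eq513}--\eqref{eq514} with forcing $f(\cdot,y)$ (the functions $a_{w_0},b_{z_0}$ depend only on $w_0,z_0,\xi$, not on $g$), so Theorem~\ref{lem531} yields $w_0\preceq Ay\preceq z_0$. In particular $w_0\preceq A w_0$ and $A z_0\preceq z_0$.

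With these in hand, induction gives the monotone chain
\begin{equation*}
w_0\preceq w_1\preceq w_2\preceq\cdots\preceq w_n\preceq\cdots\preceq z_n\preceq\cdots\preceq z_2\preceq z_1\preceq z_0,
\end{equation*}
where $w_n\preceq z_n$ follows from $w_{n-1}\preceq z_{n-1}$ and order-preservation of $A$. Hence $\{w_n\}$ is nondecreasing and $\{z_n\}$ nonincreasing in $C_{1-\,\gamma ;\, \Psi }\left( J,\,\R\right)$, both lying in the order interval $[w_0,z_0]$. To invoke Lemma~\ref{lem29} I would show relative compactness: boundedness in $\|\cdot\|_{C_{1-\gamma;\Psi}}$ follows from $w_0\preceq w_n\preceq z_0$, and equicontinuity of $\{(\Psi(\cdot)-\Psi(0))^{1-\gamma}w_n(\cdot)\}$ on $[0,T]$ follows from the explicit formula for $w_n$, the uniform estimate $\|f(\cdot,w_{n-1})\|_{C_{1-\gamma;\Psi}}\le\max\{\|f(\cdot,w_0)\|_{C_{1-\gamma;\Psi}},\|f(\cdot,z_0)\|_{C_{1-\gamma;\Psi}}\}$ (again from monotonicity of $f$), and the mapping properties of $\mathcal{I}_{0+}^{\alpha;\Psi}$ from Lemmas~\ref{lema2} and \ref{lema1}; Arzel\`a--Ascoli then applies. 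By Lemma~\ref{lem29} there are $w^*,z^*\in C_{1-\,\gamma ;\, \Psi }\left( J,\,\R\right)$ with $w_n\to w^*$, $z_n\to z^*$ in norm, and Lemma~\ref{lem30} preserves $w_0\preceq w^*\preceq z^*\preceq z_0$. Passing to the limit in $w_n=A w_{n-1}$, the domination $|f(\cdot,w_{n-1})|\le\max\{|f(\cdot,w_0)|,|f(\cdot,z_0)|\}$ against the (weakly singular, integrable) Mittag--Leffler kernels together with monotone convergence lets one interchange limit and integral, so $w^*=A w^*$ and similarly $z^*=A z^*$; that is, $w^*,z^*$ solve \eqref{eq511}--\eqref{eq512}. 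Finally, for extremality, if $y\in[w_0,z_0]$ is any solution then $y=Ay$, and starting from $w_0\preceq y$ and iterating the order-preservation of $A$ gives $w_n\preceq y$ for all $n$, whence $w^*\preceq y$ by Lemma~\ref{lem30}; symmetrically $y\preceq z^*$. Thus $w^*$ is the minimal and $z^*$ the maximal solution in $[w_0,z_0]$. The step I expect to be the main obstacle is the relative-compactness argument — obtaining equicontinuity of the weighted iterates uniformly down to $t=0$, where $(\Psi(t)-\Psi(0))^{\gamma-1}$ and the weak singularities of the $\Psi$-fractional kernels interact — together with the careful monotone/dominated justification of exchanging the limit with the integrals when $f$ is only assumed monotone in its second argument.
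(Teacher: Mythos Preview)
Your proposal is correct and follows the same overall architecture as the paper: define the solution operator $\mathcal{A}$ for the linearized BVP, show it is order-preserving and maps $[w_0,z_0]$ into itself (via Theorem~\ref{lem531}), obtain the monotone chain, use Arzel\`a--Ascoli for relative compactness, pass to the limit, and verify extremality.

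Two places where your execution differs from the paper are worth noting. First, for monotonicity of $\mathcal{A}$ the paper simply subtracts the explicit integral representations of $\mathcal{A}\delta_2$ and $\mathcal{A}\delta_1$ and observes that all kernels are nonnegative; your route---applying Theorem~\ref{lem25} to $\phi=\mathcal{A}y_1-\mathcal{A}y_2$ after extracting $\mathcal{I}_{0+}^{1-\gamma;\Psi}\phi(0)\le 0$ from the boundary relation---is equally valid and arguably more in the spirit of the comparison machinery. Second, for passing to the limit the paper asserts continuity of $\mathcal{A}$ from (H1)(i) and uses that together with complete continuity; you instead invoke monotone/dominated convergence against the integrable Mittag--Leffler kernels, which is a cleaner justification given that (H1) provides no explicit modulus of continuity for $y\mapsto f(\cdot,y(\cdot))$. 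Your uniform bound $\|f(\cdot,w_{n-1})\|_{C_{1-\gamma;\Psi}}\le\max\{\|f(\cdot,w_0)\|,\|f(\cdot,z_0)\|\}$ from monotonicity is also more transparent than the paper's fixed constant $\mathfrak{K}$. The equicontinuity you flag as the main obstacle is handled in the paper by a direct estimate (see the computation leading to \eqref{equi3}), and the same estimate goes through with your uniform bound on the forcing.
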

 \begin{proof}
 We  give  the   proof in five parts.\\
\textbf{Part 1:} We denote $\mathfrak{D}=[w_0,\,z_0]$. For any $\varphi\in \mathfrak{D}$, we consider the following linear $\Psi$-Hilfer BVP 
\begin{equation}\label{eq533}
\left.
\begin{aligned}
 & ^H \mathcal{D}^{\alpha,\,\beta\,;\, \Psi}_{0 +}y(t)- M y(t)
   = f(t, \varphi(t)), ~t \in  (0,\,T],  \\
  & \mathcal{I}_{0+}^{1-\,\gamma;\,\Psi } y(0)= r\,\mathcal{I}_{0+}^{1-\,\gamma;\,\Psi } y(T).
\end{aligned}
\right\}
\end{equation}
Since $w_0, z_0$ are lower and upper solutions respectively of the nonlinear $\Psi$-Hilfer  BVP  \eqref{eq511}-\eqref{eq512}, using hypothesis (H1)(ii), we have
\begin{align*}
^H \mathcal{D}^{\alpha,\,\beta\,;\, \Psi}_{0 +}w_0(t)- M w_0(t) \leq f(t, w_0(t))  - a_{w_0}(t) \leq f(t, \varphi(t))  - a_{w_0}(t), ~t \in  (0,\,T]
\end{align*}
and
\begin{align*}
^H \mathcal{D}^{\alpha,\,\beta\,;\, \Psi}_{0 +}z_0(t)- M z_0(t) \geq f(t, z_0(t))  + b_{z_0}(t) \geq f(t, \varphi(t))  + b_{z_0}(t), ~t \in  (0,\,T].
\end{align*}
This implies  $w_0$ and $z_0$ are the lower and upper solutions respectively of the linear $\Psi$-Hilfer  BVP  \eqref{eq533}. In the view of  Theorem \ref{lem531}, the linear $\Psi$-Hilfer BVP  \eqref{eq533} has a unique solution  $y\in \mathfrak{D}$, given by
\begin{align}\label{eq535}
y(t)&= \mathcal{I}_{0+}^{1-\,\gamma;\,\Psi } y(0)\, (\Psi(t)-\Psi(0))^{\gamma-1}E_{\alpha,\,\gamma}\left(M (\Psi(t)-\Psi(0))^\alpha \right)\nonumber\\
&\qquad+\int_{0}^{t} \Psi'(s)  (\Psi(t)-\Psi(s))^{\alpha-1}  E_{\alpha,\,\alpha}\left(M (\Psi(t)-\Psi(s))^\alpha \right) f(s, \varphi(s)) ds, ~t \in  (0,\,T].
\end{align}
Following the similar steps as  in the proof of the Theorem \ref{lem531}, from equation \eqref{eq535}, we have
\begin{align*}
\mathcal{I}_{0+}^{1-\,\gamma;\,\Psi }y(T)&=\mathcal{I}_{0+}^{1-\,\gamma;\,\Psi } y(0)\, E_{\alpha, 1}\left( M\left(\Psi(T)-\Psi(0)\right)^{\alpha}\right) \nonumber\\
&~+\int_{0}^{T} \Psi'(s)  (\Psi(T)-\Psi(s))^{\alpha-\gamma}  E_{\alpha,\,\alpha+1-\,\gamma}\left( M\left(\Psi(T)-\Psi(s)\right)^{\alpha}\right) f(s, \varphi(s))  ds.
\end{align*}
Since
$\mathcal{I}_{0+}^{1-\,\gamma;\,\Psi } y(0)= r\,\mathcal{I}_{0+}^{1-\,\gamma;\,\Psi } y(T)$, we have
\begin{align*}
\mathcal{I}_{0+}^{1-\,\gamma;\,\Psi } y(0)&= r\,\mathcal{I}_{0+}^{1-\,\gamma;\,\Psi } y(0)\, E_{\alpha, 1}\left( M\left(\Psi(T)-\Psi(0)\right)^{\alpha}\right) \\
&~+r\int_{0}^{T} \Psi'(s)  (\Psi(T)-\Psi(s))^{\alpha-\gamma}  E_{\alpha,\,\alpha+1-\,\gamma}\left( M\left(\Psi(T)-\Psi(s)\right)^{\alpha}\right) f(s, \varphi(s))  ds.
\end{align*}
Hence,
\begin{align}\label{eq536}
\mathcal{I}_{0+}^{1-\,\gamma;\,\Psi } y(0)&=\frac{r}{\left[ 1- r\, E_{\alpha, 1}\left( M\left(\Psi(T)-\Psi(0)\right)^{\alpha}\right) \right]} \nonumber\\
&\quad \times\int_{0}^{T} \Psi'(s)  (\Psi(T)-\Psi(s))^{\alpha-\gamma}  E_{\alpha,\,\alpha+1-\,\gamma}\left( M\left(\Psi(T)-\Psi(s)\right)^{\alpha}\right) f(s, \varphi(s))  ds.
\end{align}
Using equation \eqref{eq536} in \eqref{eq535}, we get
\begin{align*}
y(t)&=\frac{r\, (\Psi(t)-\Psi(0))^{\gamma-1}E_{\alpha,\,\gamma}\left(M (\Psi(t)-\Psi(0))^\alpha \right)}{\left[ 1- r\, E_{\alpha, 1}\left( M\left(\Psi(T)-\Psi(0)\right)^{\alpha}\right) \right]} \nonumber\\
&\qquad \times\int_{0}^{T} \Psi'(s)  (\Psi(T)-\Psi(s))^{\alpha-\gamma}  E_{\alpha,\,\alpha+1-\,\gamma}\left( M\left(\Psi(T)-\Psi(s)\right)^{\alpha}\right) f(s, \varphi(s))  ds\\
&\qquad+\int_{0}^{t} \Psi'(s)  (\Psi(t)-\Psi(s))^{\alpha-1}  E_{\alpha,\,\alpha}\left(M (\Psi(t)-\Psi(s))^\alpha \right) f(s, \varphi(s)) ds, ~t \in  (0,\,T].
\end{align*}
Consider the  operator $\mathcal{A}:\mathfrak{D}\rightarrow \mathcal{X}$ defined by 
\begin{align}\label{ie1}
\mathcal{A}\varphi(t)&=\frac{r\, (\Psi(t)-\Psi(0))^{\gamma-1}E_{\alpha,\,\gamma}\left(M (\Psi(t)-\Psi(0))^\alpha \right)}{\left[ 1- r\, E_{\alpha, 1}\left( M\left(\Psi(T)-\Psi(0)\right)^{\alpha}\right) \right]} \nonumber\\
&\qquad \times \int_{0}^{T} \Psi'(s)  (\Psi(T)-\Psi(s))^{\alpha-\gamma}  E_{\alpha,\,\alpha+1-\,\gamma}\left( M\left(\Psi(T)-\Psi(s)\right)^{\alpha}\right) f(s, \varphi(s))  ds\nonumber\\
&\quad+\int_{0}^{t} \Psi'(s)  (\Psi(t)-\Psi(s))^{\alpha-1}  E_{\alpha,\,\alpha}\left(M (\Psi(t)-\Psi(s))^\alpha \right) f(s, \varphi(s)) ds, ~t \in  (0,\,T].
\end{align}
It follows that, $\mathcal{A}\varphi(t)$ is the solution of the linear BVP \eqref{eq533}. Using the Theorem \ref{lem531}, we have 
$$
w_0(t)\leq \mathcal{A}\varphi(t)\leq z_0(t), ~\text{for}~\varphi\in \mathfrak{D}=[w_0,\,z_0] ~\text{and} ~t \in  (0,\,T].
$$
In particular, we have $w_0\preceq \mathcal{A}w_0$ and $\mathcal{A}z_0\preceq z_0$ in $C_{1-\,\gamma ;\, \Psi }\left( J,\,\R\right)$.\\
\textbf{ Part 2:} In this part, we prove that $\mathcal{A}:\mathfrak{D}\rightarrow \mathcal{X}$  is completely continuous operator. \\
Firstly we prove that, $\mathcal{A}(\mathfrak{D})$ is uniformly bounded. Since $(\Psi(\cdot)-\Psi(0))^{1-\gamma}f(\cdot, y(\cdot))$ is continuous on compact interval $J$,  it is  bounded. Hence, there exists constant $\mathfrak{K}>0$ such that
\begin{equation}\label{b1}
\underset{t\in J
 }{\max }\left\vert (\Psi(t)-\Psi(0))^{1-\gamma} f(t,\,y(t)) \right\vert\leq \mathfrak{K}.
\end{equation}
Using increasing nature of $\Psi$, condition \eqref{b1} and the Lemma \ref{lema2}(b), for any $t\in J,$ we obtain
\begin{align*}
&\left\vert(\Psi(t)-\Psi(0))^{1-\,\gamma} \mathcal{A}\varphi(t)\right\vert\\
&=\left\vert \frac{r\, E_{\alpha,\,\gamma}\left(M (\Psi(t)-\Psi(0))^\alpha \right)}{\left[ 1- r\, E_{\alpha, 1}\left( M\left(\Psi(T)-\Psi(0)\right)^{\alpha}\right) \right]} \nonumber\right.\\
& \left.\qquad \quad  \times \int_{0}^{T} \Psi'(s)  (\Psi(T)-\Psi(s))^{\alpha-\gamma}  E_{\alpha,\,\alpha+1-\,\gamma}\left( M\left(\Psi(T)-\Psi(s)\right)^{\alpha}\right) f(s, \varphi(s))  ds\right.\\
&\left.\qquad+(\Psi(t)-\Psi(0))^{1-\,\gamma}\int_{0}^{t} \Psi'(s)  (\Psi(t)-\Psi(s))^{\alpha-1}  E_{\alpha,\,\alpha}\left(M (\Psi(t)-\Psi(s))^\alpha \right) f(s, \varphi(s)) ds\right\vert\\
&\leq \frac{r\, E_{\alpha,\,\gamma}\left(M (\Psi(T)-\Psi(0))^\alpha \right)\,E_{\alpha,\,\alpha+1-\,\gamma}\left(M (\Psi(T)-\Psi(0))^\alpha \right)}{\left[ 1- r\, E_{\alpha, 1}\left( M\left(\Psi(T)-\Psi(0)\right)^{\alpha}\right) \right]}\\
&\qquad \quad \times \int_{0}^{T} \Psi'(s)  (\Psi(T)-\Psi(s))^{\alpha-\gamma}  (\Psi(s)-\Psi(0))^{\gamma-1} \left| (\Psi(s)-\Psi(0))^{1-\gamma} f(s, \varphi(s)) \right|  ds\\
&\quad+(\Psi(T)-\Psi(0))^{1-\,\gamma} E_{\alpha,\,\alpha}\left(M (\Psi(T)-\Psi(0))^\alpha \right)\\
&\qquad \quad \times \int_{0}^{t} \Psi'(s)  (\Psi(t)-\Psi(s))^{\alpha-1} (\Psi(s)-\Psi(0))^{\gamma-1}  \left| (\Psi(s)-\Psi(0))^{1-\gamma} f(s, \varphi(s)) \right|  ds\\
&\leq \frac{r\, E_{\alpha,\,\gamma}\left(M (\Psi(T)-\Psi(0))^\alpha \right)\,E_{\alpha,\,\alpha+1-\,\gamma}\left(M (\Psi(T)-\Psi(0))^\alpha \right)}{\left[ 1- r\, E_{\alpha, 1}\left( M\left(\Psi(T)-\Psi(0)\right)^{\alpha}\right) \right]}\\
&\qquad \quad \times \mathfrak{K}\,\Gamma(\alpha-\gamma+1)\left[\mathcal{I}_{0+}^{\alpha-\,\gamma+1;\,\Psi }   (\Psi(t)-\Psi(0))^{\gamma-1}  \right] _{t=T}   \\
&+(\Psi(T)-\Psi(0))^{1-\,\gamma} E_{\alpha,\,\alpha}\left(M (\Psi(T)-\Psi(0))^\alpha \right)\, \mathfrak{K}\, \Gamma(\alpha)\, \mathcal{I}_{0+}^{\alpha;\,\Psi }   (\Psi(t)-\Psi(0))^{\gamma-1} \\
&\leq \frac{r\, E_{\alpha,\,\gamma}\left(M (\Psi(T)-\Psi(0))^\alpha \right)\,E_{\alpha,\,\alpha+1-\,\gamma}\left(M (\Psi(T)-\Psi(0))^\alpha \right)}{\left[ 1- r\, E_{\alpha, 1}\left( M\left(\Psi(T)-\Psi(0)\right)^{\alpha}\right) \right]}\\
&\qquad \quad \times \mathfrak{K}\,\Gamma(\alpha-\gamma+1) \frac{\Gamma(\gamma)}{\Gamma(\alpha+1)} (\Psi(T)-\Psi(0))^{\alpha}  \\
&+(\Psi(T)-\Psi(0))^{1-\,\gamma} E_{\alpha,\,\alpha}\left(M (\Psi(T)-\Psi(0))^\alpha \right)\, \mathfrak{K}\, \Gamma(\alpha) \frac{\Gamma(\gamma)}{\Gamma(\gamma+\alpha)} (\Psi(T)-\Psi(0))^{\alpha+\gamma-1} \\
&= \mathfrak{K}(\Psi(T)-\Psi(0))^{\alpha}\\ 
&\times \quad \left\lbrace B\left(\alpha-\gamma+1, \gamma \right)\frac{r\, E_{\alpha,\,\gamma}\left(M (\Psi(T)-\Psi(0))^\alpha \right)\,E_{\alpha,\,\alpha+1-\,\gamma}\left(M (\Psi(T)-\Psi(0))^\alpha \right)}{\left[ 1- r\, E_{\alpha, 1}\left( M\left(\Psi(T)-\Psi(0)\right)^{\alpha}\right) \right]}\right.\\
& \quad+ B\left(\alpha, \gamma \right) E_{\alpha,\,\alpha}\left(M (\Psi(T)-\Psi(0))^\alpha \right)\Big\} \\
&:=\omega\end{align*}
From above inequality it follows that, 
$$
\left\Vert \mathcal{A}\varphi\right\Vert _{C_{1-\,\gamma ;\Psi }\left( J,\,\R\right) }\leq \omega,~\varphi\in\mathfrak{D}.
$$
This implies, $\mathcal{A}(\mathfrak{D})$ is uniformly bounded. Next, we prove that  $\mathcal{A}(\mathfrak{D})$ is equicontinuous. Let any $t_1, t_2\in J$ such that $t_2>t_1$. Then, for any $\varphi\in \mathfrak{D}$, we obtain
\begin{align}\label{equi3}
&\left| \left( \Psi(t_2)-\Psi(0)\right)^{1-\,\gamma} \mathcal{A}\varphi(t_2)-\left( \Psi(t_1)-\Psi(0)\right)^{1-\,\gamma} \mathcal{A}\varphi(t_1)\right|\nonumber\\
&=\left|\left\lbrace \frac{r\, E_{\alpha,\,\gamma}\left(M (\Psi(t_2)-\Psi(0))^\alpha \right)}{\left[ 1- r\, E_{\alpha, 1}\left( M\left(\Psi(T)-\Psi(0)\right)^{\alpha}\right) \right]} \nonumber\right.\right.\nonumber\\
&\left.\left.\qquad \times \int_{0}^{T} \Psi'(s)  (\Psi(T)-\Psi(s))^{\alpha-\gamma}  E_{\alpha,\,\alpha+1-\,\gamma}\left( M\left(\Psi(T)-\Psi(s)\right)^{\alpha}\right) f(s, \varphi(s))  ds\right.\right.\nonumber\\
&\left.\left.\qquad+(\Psi(t_2)-\Psi(0))^{1-\,\gamma}\int_{0}^{t_2} \Psi'(s)  (\Psi(t_2)-\Psi(s))^{\alpha-1}  E_{\alpha,\,\alpha}\left(M (\Psi(t_2)-\Psi(s))^\alpha \right) f(s, \varphi(s)) ds \right\rbrace\right.\nonumber \\
&\left.\quad-\left\lbrace \frac{r\, E_{\alpha,\,\gamma}\left(M (\Psi(t_1)-\Psi(0))^\alpha \right)}{\left[ 1- r\, E_{\alpha, 1}\left( M\left(\Psi(T)-\Psi(0)\right)^{\alpha}\right) \right]} \nonumber\right.\right.\nonumber\\
&\left.\left.\qquad \times\int_{0}^{T} \Psi'(s)  (\Psi(T)-\Psi(s))^{\alpha-\gamma}  E_{\alpha,\,\alpha+1-\,\gamma}\left( M\left(\Psi(T)-\Psi(s)\right)^{\alpha}\right) f(s, \varphi(s))  ds\right.\right.\nonumber\\
&\left.\left.\qquad+(\Psi(t_1)-\Psi(0))^{1-\,\gamma}\int_{0}^{t_1} \Psi'(s)  (\Psi(t_1)-\Psi(s))^{\alpha-1}  E_{\alpha,\,\alpha}\left(M (\Psi(t_1)-\Psi(s))^\alpha \right) f(s, \varphi(s)) ds \right\rbrace \right| \nonumber \\
&\leq \frac{r \, E_{\alpha,\,\alpha+1-\,\gamma}\left( M\left(\Psi(T)-\Psi(0)\right)^{\alpha}\right)}{\left[ 1- r\, E_{\alpha, 1}\left( M\left(\Psi(T)-\Psi(0)\right)^{\alpha}\right) \right]} \left| E_{\alpha,\,\gamma}\left(M (\Psi(t_2)-\Psi(0))^\alpha\right) -E_{\alpha,\,\gamma}\left(M (\Psi(t_1)-\Psi(0))^\alpha\right)  \right|  \nonumber\\
&\qquad \times \int_{0}^{T} \Psi'(s)  (\Psi(T)-\Psi(s))^{\alpha-\gamma}  (\Psi(s)-\Psi(0))^{\gamma-1}  \left| (\Psi(s)-\Psi(0))^{1-\gamma}  f(s, \varphi(s)) \right|  ds\nonumber\\
&\quad+\left| (\Psi(t_2)-\Psi(0))^{1-\,\gamma}E_{\alpha,\,\alpha}\left(M (\Psi(T)-\Psi(0))^\alpha \right)\right.\nonumber\\
&\left.\quad \times\int_{0}^{t_2} \Psi'(s)  (\Psi(t_2)-\Psi(s))^{\alpha-1}   (\Psi(s)-\Psi(0))^{\gamma-1}  \left| (\Psi(s)-\Psi(0))^{1-\gamma}  f(s, \varphi(s))\right|  ds\right.\nonumber\\
&\quad-\left. (\Psi(t_1)-\Psi(0))^{1-\,\gamma}E_{\alpha,\,\alpha}\left(M (\Psi(T)-\Psi(0))^\alpha \right)\right.\nonumber\\
&\left.\quad \times \int_{0}^{t_1} \Psi'(s)  (\Psi(t_1)-\Psi(s))^{\alpha-1}  (\Psi(s)-\Psi(0))^{\gamma-1}  \left| (\Psi(s)-\Psi(0))^{1-\gamma}  f(s, \varphi(s))\right|  ds\right|\nonumber \\
&\leq \frac{r \, E_{\alpha,\,\alpha+1-\,\gamma}\left( M\left(\Psi(T)-\Psi(0)\right)^{\alpha}\right)}{\left[ 1- r\, E_{\alpha, 1}\left( M\left(\Psi(T)-\Psi(0)\right)^{\alpha}\right) \right]} \left| E_{\alpha,\,\gamma}\left(M (\Psi(t_2)-\Psi(0))^\alpha\right) -E_{\alpha,\,\gamma}\left(M (\Psi(t_1)-\Psi(0))^\alpha\right)  \right|  \nonumber\\
&\qquad \times \mathfrak{K} \Gamma(\alpha-\gamma+1)\left[\mathcal{I}_{0+}^{\alpha-\,\gamma+1;\,\Psi }   (\Psi(t)-\Psi(0))^{\gamma-1}  \right] _{t=T} \nonumber \\
&\quad+\left|\mathfrak{K} (\Psi(t_2)-\Psi(0))^{1-\,\gamma}E_{\alpha,\,\alpha}\left(M (\Psi(T)-\Psi(0))^\alpha \right)\Gamma(\alpha)\left[\mathcal{I}_{0+}^{\alpha;\,\Psi }   (\Psi(t)-\Psi(0))^{\gamma-1}  \right] _{t=t_2}\right.\nonumber\\
&\quad-\left.\mathfrak{K} (\Psi(t_1)-\Psi(0))^{1-\,\gamma}E_{\alpha,\,\alpha}\left(M (\Psi(T)-\Psi(0))^\alpha \right)\Gamma(\alpha)\left[\mathcal{I}_{0+}^{\alpha;\,\Psi }   (\Psi(t)-\Psi(0))^{\gamma-1}  \right] _{t=t_1}\right| \nonumber\\
&\leq \frac{\mathfrak{K}\,r\,E_{\alpha,\,\alpha+1-\,\gamma}\left( M\left(\Psi(T)-\Psi(0)\right)^{\alpha}\right)}{\left[ 1- r\, E_{\alpha, 1}\left( M\left(\Psi(T)-\Psi(0)\right)^{\alpha}\right) \right]} B(\alpha-\gamma+1,\gamma) (\Psi(T)-\Psi(0))^{\alpha}  \nonumber\\
&\qquad \times \left| E_{\alpha,\,\gamma}\left(M (\Psi(t_2)-\Psi(0))^\alpha\right) -E_{\alpha,\,\gamma}\left(M (\Psi(t_1)-\Psi(0))^\alpha\right)  \right|\nonumber\\
&\qquad + \mathfrak{K}\,E_{\alpha,\,\alpha}\left(M (\Psi(T)-\Psi(0))^\alpha \right)B(\alpha,\gamma)\left| (\Psi(t_2)-\Psi(0))^{\alpha}-(\Psi(t_1)-\Psi(0))^{\alpha} \right|. 
\end{align}
By Lemma \ref{lem28}, two parameter Mittag-Leffler function is uniformly continuous. Therefore, we have
\begin{equation}\label{equi1}
\left| E_{\alpha,\,\gamma}\left(M (\Psi(t_2)-\Psi(0))^\alpha\right) -E_{\alpha,\,\gamma}\left(M (\Psi(t_1)-\Psi(0))^\alpha\right)  \right|\rightarrow0~\text{as}~\left| t_2-t_1 \right|\rightarrow0.
\end{equation}
Further, using the continuity of $\Psi$, we have 
\begin{equation}\label{equi2}
\left| (\Psi(t_2)-\Psi(0))^{\alpha}-(\Psi(t_1)-\Psi(0))^{\alpha} \right|\rightarrow0~\text{as}~\left| t_2-t_1 \right|\rightarrow0.
\end{equation}
Using the conditions \eqref{equi1} and \eqref{equi2} in the inequality \eqref{equi3}, we obtain
$$
\left| \left( \Psi(t_2)-\Psi(0)\right)^{1-\,\gamma} \mathcal{A}\varphi(t_2)-\left( \Psi(t_1)-\Psi(0)\right)^{1-\,\gamma} \mathcal{A}\varphi(t_1)\right|\rightarrow0~\text{as}~\left| t_2-t_1 \right|\rightarrow0.
$$
This proves $\mathcal{A}(\mathfrak{D})$ is equicontinuous set of family of functions. Therefore, by Arzel$\grave{a}$-Ascoli theorem, $\mathcal{A}(\mathfrak{D})$ is relatively compact. Note that, the continuity of operator $\mathcal{A}$ follows from hypothesis (H1)(i). We have proved  $\mathcal{A}:\mathfrak{D}\rightarrow \mathcal{X}$   is completely continuous operator.\\ 
\textbf{Part 3:} In this part, it is proved that $\mathcal{A}:\mathfrak{D}\rightarrow \mathcal{X}$ is monotonically increasing operator.\\
Let any $\delta_1, \delta_2\in \mathfrak{D}$  with $w_0\leq \delta_1\leq \delta_2\leq z_0$. Define $\mathcal{B}(t)=f(t,\,\delta_2(t))-f(t,\,\delta_1(t)),\,t\in (0, T]$. Then from hypothesis (H1)(ii), we have $\mathcal{B}(t)\geq 0, ~t\in (0, T]$. For any $t\in (0, T]$, 
\begin{align*}
&\mathcal{A}\delta_2(t)-\mathcal{A}\delta_1(t)\\
&=\left\lbrace \frac{r\, (\Psi(t)-\Psi(0))^{\gamma-1}E_{\alpha,\,\gamma}\left(M (\Psi(t)-\Psi(0))^\alpha \right)}{\left[ 1- r\, E_{\alpha, 1}\left( M\left(\Psi(T)-\Psi(0)\right)^{\alpha}\right) \right]} \nonumber\right.\\
&\left.\qquad \times\int_{0}^{T} \Psi'(s)  (\Psi(T)-\Psi(s))^{\alpha-\gamma}  E_{\alpha,\,\alpha+1-\,\gamma}\left( M\left(\Psi(T)-\Psi(s)\right)^{\alpha}\right) f(s, \delta_2(s))  ds\right.\\
&\left.\qquad+\int_{0}^{t} \Psi'(s)  (\Psi(t)-\Psi(s))^{\alpha-1}  E_{\alpha,\,\alpha}\left(M (\Psi(t)-\Psi(s))^\alpha \right) f(s, \delta_2(s)) ds\right\rbrace \\
&-\left\lbrace \frac{r\, (\Psi(t)-\Psi(0))^{\gamma-1}E_{\alpha,\,\gamma}\left(M (\Psi(t)-\Psi(0))^\alpha \right)}{\left[ 1- r\, E_{\alpha, 1}\left( M\left(\Psi(T)-\Psi(0)\right)^{\alpha}\right) \right]} \nonumber\right.\\
&\left.\qquad \times\int_{0}^{T} \Psi'(s)  (\Psi(T)-\Psi(s))^{\alpha-\gamma}  E_{\alpha,\,\alpha+1-\,\gamma}\left( M\left(\Psi(T)-\Psi(s)\right)^{\alpha}\right) f(s, \delta_1(s))  ds\right.\\
&\left.\qquad+\int_{0}^{t} \Psi'(s)  (\Psi(t)-\Psi(s))^{\alpha-1}  E_{\alpha,\,\alpha}\left(M (\Psi(t)-\Psi(s))^\alpha \right) f(s, \delta_1(s)) ds\right\rbrace \\
&= \frac{r\, (\Psi(t)-\Psi(0))^{\gamma-1}E_{\alpha,\,\gamma}\left(M (\Psi(t)-\Psi(0))^\alpha \right)}{\left[ 1- r\, E_{\alpha, 1}\left( M\left(\Psi(T)-\Psi(0)\right)^{\alpha}\right) \right]} \nonumber\\
&\qquad \times \int_{0}^{T} \Psi'(s)  (\Psi(T)-\Psi(s))^{\alpha-\gamma}  E_{\alpha,\,\alpha+1-\,\gamma}\left( M\left(\Psi(T)-\Psi(s)\right)^{\alpha}\right) \mathcal{B}(s) ds\\
&\qquad+\int_{0}^{t} \Psi'(s)  (\Psi(t)-\Psi(s))^{\alpha-1}  E_{\alpha,\,\alpha}\left(M (\Psi(t)-\Psi(s))^\alpha \right)  \mathcal{B}(s) ds.
\end{align*}
Since $\mathcal{B}(t)\geq 0, ~t\in (0, T]$ and $\Psi$ is increasing function on $J$, the integrands of both the integrals in the right hand side of the above inequality is non-negative on $ (0, T]$. Therefore, we have $\mathcal{A}\delta_2(t)-\mathcal{A}\delta_1(t)\geq 0, ~t\in (0, T]$. This gives $\mathcal{A}\delta_2\succeq\mathcal{A}\delta_1$. Therefore, $\mathcal{A}:\mathfrak{D}\rightarrow \mathcal{X}$ is monotonically increasing operator.\\
\textbf{Part 4:} For each $n (n=1, 2, 3,\cdots) $ define $w_n=\mathcal{A}\,w_{n-1}$  and $z_n=\mathcal{A}\,z_{n-1}$. By Part 1, it follows that
$$
w_0\preceq\mathcal{A}w_0=w_1~\text{and}~ z_1=\mathcal{A}z_0\preceq z_0 \text{ in}~C_{1-\,\gamma ;\, \Psi }\left( J,\,\R\right).
$$
Therefore, by using increasing nature of an  operator $\mathcal{A}$,  we have
$$
 w_1\preceq w_2\preceq\cdots\preceq w_n\preceq z_n\preceq \cdots\preceq z_2\preceq z_1 \text{ in}~C_{1-\,\gamma ;\, \Psi }\left( J,\,\R\right).
$$
This implies $\{w_n\}_{n=1}^\infty$ and $ \{z_n\}_{n=1}^\infty$ are the monotonic sequences in  $\mathcal{A}(\mathfrak{D})\subseteq \mathcal{X}$ which are relatively compact also. Therefore,  by applying Lemma \ref{lem29}, there exists $w^*, z^*\in C_{1-\,\gamma ;\, \Psi }\left( J,\,\R\right)$   such that
\begin{equation}\label{inc1}
 \lim\limits_{n\rightarrow\infty}\left\Vert w_n-w^*\right\Vert _{C_{1-\,\gamma ;\Psi }\left(  J, \R\right)  }=0
\end{equation}
and
	\begin{equation}\label{inc2}
\lim\limits_{n\rightarrow\infty}\left\Vert z_n-z^*\right\Vert _{C_{1-\,\gamma ;\Psi }\left(  J, \R\right)  }=0.	
	\end{equation}
Since for each $n$,  $w_n=\mathcal{A}\,w_{n-1}$  and $z_n=\mathcal{A}\,z_{n-1}$, by   continuity of the operator $\mathcal{A}$ and  using the limits \eqref{inc1} and \eqref{inc2},
we obtain 
$$
w^*=\mathcal{A}w^*~\text{and}~ z^*=\mathcal{A}z^*.
$$
Therefore, $w^*$ and $z^*$ are the fixed points of an operator $\mathcal{A}$. Further, we know that, $y\in C_{1-\,\gamma ;\, \Psi }\left( J,\,\R\right)$ is the solution of the nonlinear $\Psi$-Hilfer BVP \eqref{eq511}-\eqref{eq512} if and only if it is the fixed point of an operator $\mathcal{A}$. Thus, $w^*$ and $z^*$ are the solutions of the nonlinear  $\Psi$-Hilfer BVP \eqref{eq511}-\eqref{eq512}.\\
\textbf{Part 5:} Finally, we prove that  $w^*$ and $z^*$ are the  minimal  and the maximal solutions respectively of the nonlinear $\Psi$-Hilfer BVP \eqref{eq511}-\eqref{eq512}.
Let  $y\in [w_0,\,z_0]$ be any solution of  the $\Psi$-Hilfer BVP \eqref{eq511}-\eqref{eq512}. Then, 
$$
y=\mathcal{A}y ~~\text{and}~~  w_0\preceq y \preceq z_0.
$$
Since  $\mathcal{A}$ is an increasing  operator, we have
$$
w_1=\mathcal{A}w_0\preceq \mathcal{A}y=y\preceq \mathcal{A}z_0=z_1.
$$
Again, using the increasing nature of an operator $\mathcal{A}$, from above inequality,  we obtain
$$
w_2\preceq y\preceq z_2.
$$
Continuing in this way, we obtain
$$
w_n\preceq y\preceq z_n,\,\, n=1,2,3,\cdots.
$$
Taking the limit as $n\rightarrow\infty$ in the above inequality with respect to the norm $\left\Vert \cdot\right\Vert _{C_{1-\,\gamma ;\Psi }\left(  J, \R\right)}$, we obtain
$$
w^*\preceq y\preceq z^*~\text{ in}~ C_{1-\,\gamma ;\Psi }\left(  J, \R\right).
$$
Therefore,  $w^*$ and $z^*$ are the  minimal solution and the maximal solution respectively in $ C_{1-\,\gamma ;\Psi }\left(  J, \R\right) $ of the nonlinear  $\Psi$-Hilfer  BVP \eqref{eq511}-\eqref{eq512}.
 \end{proof}
  
 \begin{theorem}\label{th52}
 Suppose that  the conditions of Theorem \ref{thm541} hold, and let there exists a constant 
$
 \tilde{L}\in\left[ 0,\, \frac{1}{(\Psi(T)-\Psi(0))^{\alpha}}\Omega  ^{-1}\right),
$
where
	\begin{align}\label{er9}
\Omega&=  B(\alpha-\gamma+1,\gamma)  \frac{r\,E_{\alpha,\,\gamma}\left( M\left(\Psi(T)-\Psi(0)\right)^{\alpha}\right) E_{\alpha,\,\alpha+1-\,\gamma}\left( M\left(\Psi(T)-\Psi(0)\right)^{\alpha}\right)}{\left[ 1-r\, E_{\alpha,\,1}\left( M\left(\Psi(T)-\Psi(0)\right)^{\alpha}\right)\right] } \nonumber \\
&\qquad\qquad + B(\alpha,\gamma)E_{\alpha,\,\alpha}\left( M\left(\Psi(T)-\Psi(0)\right)^{\alpha}\right).
	\end{align}
Further, assume that $f$ satisfies 
\begin{equation}\label{eq537}
f(t,\,x_2)-f(t,\,x_1)\leq\tilde{L}(x_2-x_1),~\text{for any } x_1, x_2\in\R\,\, \text{and}\,\, x_1\leq x_2.
\end{equation}
Then, the $\Psi$-Hilfer  BVP \eqref{eq511}-\eqref{eq512} has a unique solution $y^*$ in $[w_0,\,z_0]$. Moreover, for each $y_0\in[w_0,\,z_0]$ the iterative sequence defined by
\begin{align*}
 y_n(t)&=\frac{r\,(\Psi(t)-\Psi(0))^{\gamma-1}E_{\alpha,\,\gamma}\left(M (\Psi(t)-\Psi(0))^\alpha \right) }{\left[ 1-r\,E_{\alpha,\,1}\left(M (\Psi(T)-\Psi(0))^\alpha \right)\right]  }\\
 &\quad \times\int_{0}^{T} \Psi'(s)  (\Psi(T)-\Psi(s))^{\alpha-\gamma}  E_{\alpha,\,\alpha+1-\,\gamma}\left(M (\Psi(T)-\Psi(s))^\alpha \right) f(s,\,y_{n-1}(s)) ds\\
 &\qquad+\int_{0}^{t} \Psi'(s)  (\Psi(t)-\Psi(s))^{\alpha-1}  E_{\alpha,\,\alpha}\left(M (\Psi(t)-\Psi(s))^\alpha \right) f(s,\,y_{n-1}(s)) ds,~ n=1,2,3,\cdots,
\end{align*}
is such that
$$
\lim\limits_{n\rightarrow \infty}\left\| y_n-y^*\right\|_{C_{1-\,\gamma ;\Psi }\left( J,\,\R\right) }= 0
$$
and
$$
\left\| y_n-y^*\right\|_{C_{1-\,\gamma ;\Psi }\left( J,\,\R\right) }\leq  \varrho^n \left\| z_0-w_0\right\|_{C_{1-\,\gamma ;\Psi }\left( J,\,\R\right) },
$$
where
\begin{align}\label{rho}
\varrho=&\Omega\,(\Psi(T)-\Psi(0))^{\alpha}\,\tilde{L}.
\end{align}
  \end{theorem}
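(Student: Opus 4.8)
The plan is to upgrade the monotone‑iteration machinery of Theorem~\ref{thm541}: under the one‑sided Lipschitz condition~\eqref{eq537}, I would show that the operator $\mathcal{A}$ of~\eqref{ie1} contracts \emph{along ordered pairs} of $\mathfrak{D}=[w_0,z_0]$, with contraction constant exactly $\varrho$ from~\eqref{rho}. From this single estimate, uniqueness of the fixed point in $[w_0,z_0]$ and the geometric error bound both follow by an order‑sandwich argument, so I never need the two arbitrary iterates $y_n$ and $y^*$ to be comparable.

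The central step is:
\begin{equation*}
\delta_1,\delta_2\in\mathfrak{D},\ \delta_1\preceq\delta_2\ \Longrightarrow\ \|\mathcal{A}\delta_2-\mathcal{A}\delta_1\|_{C_{1-\gamma;\Psi}(J,\R)}\le\varrho\,\|\delta_2-\delta_1\|_{C_{1-\gamma;\Psi}(J,\R)}.
\end{equation*}
Starting from the representation of $\mathcal{A}\delta_2(t)-\mathcal{A}\delta_1(t)$ in terms of $\mathcal{B}(s)=f(s,\delta_2(s))-f(s,\delta_1(s))$ derived in Part~3 of the proof of Theorem~\ref{thm541}, I would use \eqref{eq537} with $\delta_1\preceq\delta_2$ to get $0\le\mathcal{B}(s)\le\tilde L(\delta_2(s)-\delta_1(s))\le\tilde L(\Psi(s)-\Psi(0))^{\gamma-1}\|\delta_2-\delta_1\|_{C_{1-\gamma;\Psi}}$, then multiply through by $(\Psi(t)-\Psi(0))^{1-\gamma}$, bound the Mittag–Leffler factors by their values at $t=T$ (monotonicity in the positive argument, via the series of Lemma~\ref{lem28}), and evaluate the two resulting integrals by Lemma~\ref{lema2}(b), recognizing $\tfrac{\Gamma(\alpha-\gamma+1)\Gamma(\gamma)}{\Gamma(\alpha+1)}=B(\alpha-\gamma+1,\gamma)$ and $\tfrac{\Gamma(\alpha)\Gamma(\gamma)}{\Gamma(\alpha+\gamma)}=B(\alpha,\gamma)$. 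This is verbatim the computation of Part~2 of Theorem~\ref{thm541} with the bound $\mathfrak{K}$ replaced by $\tilde L\|\delta_2-\delta_1\|_{C_{1-\gamma;\Psi}}$, so the right‑hand side collapses to $\Omega(\Psi(T)-\Psi(0))^{\alpha}\tilde L\,\|\delta_2-\delta_1\|_{C_{1-\gamma;\Psi}}=\varrho\|\delta_2-\delta_1\|_{C_{1-\gamma;\Psi}}$, with $\Omega$ as in~\eqref{er9}; and the hypothesis $\tilde L<[(\Psi(T)-\Psi(0))^{\alpha}\Omega]^{-1}$ is exactly $\varrho<1$. I expect this bookkeeping to be the main (and essentially the only) obstacle.

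For uniqueness I would apply the central estimate to the ordered pair $w^*\preceq z^*$ provided by Theorem~\ref{thm541}: since both are fixed points, $\|z^*-w^*\|_{C_{1-\gamma;\Psi}}=\|\mathcal{A}z^*-\mathcal{A}w^*\|_{C_{1-\gamma;\Psi}}\le\varrho\|z^*-w^*\|_{C_{1-\gamma;\Psi}}$ with $\varrho<1$, whence $w^*=z^*=:y^*$. If $y\in[w_0,z_0]$ is any solution, Part~5 of Theorem~\ref{thm541} gives $w^*\preceq y\preceq z^*$, so $y=y^*$; thus $y^*$ is the unique solution in $[w_0,z_0]$.

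Finally, for convergence and the error estimate: given $y_0\in[w_0,z_0]$, set $y_n=\mathcal{A}y_{n-1}$, which is precisely the iterate in the statement. Because $\mathcal{A}$ is increasing and $w_0\preceq y_0\preceq z_0$, induction gives $w_n\preceq y_n\preceq z_n$; applying $\mathcal{A}^n$ to $w_0\preceq y^*\preceq z_0$ and using $\mathcal{A}y^*=y^*$ gives $w_n\preceq y^*\preceq z_n$. Hence $|y_n(t)-y^*(t)|\le z_n(t)-w_n(t)$ for all $t\in(0,T]$, and multiplying by $(\Psi(t)-\Psi(0))^{1-\gamma}\ge0$ and taking the maximum, $\|y_n-y^*\|_{C_{1-\gamma;\Psi}}\le\|z_n-w_n\|_{C_{1-\gamma;\Psi}}$. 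Since $z_n-w_n=\mathcal{A}z_{n-1}-\mathcal{A}w_{n-1}$ with $w_{n-1}\preceq z_{n-1}$, the central estimate yields $\|z_n-w_n\|_{C_{1-\gamma;\Psi}}\le\varrho\|z_{n-1}-w_{n-1}\|_{C_{1-\gamma;\Psi}}$, and iterating, $\|z_n-w_n\|_{C_{1-\gamma;\Psi}}\le\varrho^n\|z_0-w_0\|_{C_{1-\gamma;\Psi}}$. Combining, $\|y_n-y^*\|_{C_{1-\gamma;\Psi}}\le\varrho^n\|z_0-w_0\|_{C_{1-\gamma;\Psi}}\to0$ because $\varrho<1$, which is exactly the asserted bound and limit.
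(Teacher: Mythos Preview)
Your proposal is correct and follows essentially the same route as the paper: both derive the ordered-contraction estimate $\|\mathcal{A}\delta_2-\mathcal{A}\delta_1\|_{C_{1-\gamma;\Psi}}\le\varrho\|\delta_2-\delta_1\|_{C_{1-\gamma;\Psi}}$ for $\delta_1\preceq\delta_2$ via the same Mittag--Leffler/Beta-function bookkeeping, then iterate it on $(w_n,z_n)$ and sandwich both $y_n$ and $y^*$ between $w_n$ and $z_n$. The only cosmetic difference is that you apply the contraction directly to the fixed pair $w^*\preceq z^*$ to get uniqueness, whereas the paper first shows $\|z_n-w_n\|\to 0$ and passes to the limit; your handling of the absolute value $|y_n(t)-y^*(t)|\le z_n(t)-w_n(t)$ is in fact slightly cleaner than the paper's one-sided inequality.
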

\begin{proof}
Let any $w_1, z_1\in [w_0, z_0]$ with $w_1\preceq z_1$. Then using the definition of operator $\mathcal{A}$ defined in  \eqref{ie1} and the condition \eqref{eq537}, we obtain
\begin{align*}
&\left( \left(\Psi(t)-\Psi(0)\right)^{1-\,\gamma}\left( \mathcal{A}z_1(t)-\mathcal{A}w_1(t)\right)\right)  \\
&=\left\lbrace \frac{r\,E_{\alpha,\,\gamma}\left(M (\Psi(t)-\Psi(0))^\alpha \right) }{\left[ 1-r\,E_{\alpha,\,1}\left(M (\Psi(T)-\Psi(0))^\alpha \right)\right]  }\right.\\
 &\quad\left. \times \int_{0}^{T} \Psi'(s)  (\Psi(T)-\Psi(s))^{\alpha-\gamma}  E_{\alpha,\,\alpha+1-\,\gamma}\left(M (\Psi(T)-\Psi(s))^\alpha \right) f(s,\,z_{1}(s)) ds\right.\\
 &\qquad\left.+(\Psi(t)-\Psi(0))^{1-\,\gamma}\int_{0}^{t} \Psi'(s)  (\Psi(t)-\Psi(s))^{\alpha-1}  E_{\alpha,\,\alpha}\left(M (\Psi(t)-\Psi(s))^\alpha \right) f(s,\,z_{1}(s)) ds\right\rbrace \\
 &\qquad-\left\lbrace \frac{r\,E_{\alpha,\,\gamma}\left(M (\Psi(t)-\Psi(0))^\alpha \right) }{\left[ 1-r\,E_{\alpha,\,1}\left(M (\Psi(T)-\Psi(0))^\alpha \right)\right]  }\right.\\
&\quad\left. \times \int_{0}^{T} \Psi'(s)  (\Psi(T)-\Psi(s))^{\alpha-\gamma}  E_{\alpha,\,\alpha+1-\,\gamma}\left(M (\Psi(T)-\Psi(s))^\alpha \right) f(s,\,w_{1}(s)) ds\right.\\
&\qquad\left.+(\Psi(t)-\Psi(0))^{1-\,\gamma}\int_{0}^{t} \Psi'(s)  (\Psi(t)-\Psi(s))^{\alpha-1}  E_{\alpha,\,\alpha}\left(M (\Psi(t)-\Psi(s))^\alpha \right) f(s,\,w_{1}(s)) ds\right\rbrace \\
&\leq\frac{\tilde{L}\, r\,E_{\alpha,\,\gamma}\left(M (\Psi(t)-\Psi(0))^\alpha \right) }{\left[ 1-r\,E_{\alpha,\,1}\left(M (\Psi(T)-\Psi(0))^\alpha \right)\right]  }\int_{0}^{T} \Psi'(s)  (\Psi(T)-\Psi(s))^{\alpha-\gamma} E_{\alpha,\,\alpha+1-\,\gamma}\left(M (\Psi(T)-\Psi(s))^\alpha \right) \\
&  \qquad \times (\Psi(s)-\Psi(0))^{\gamma-1} (\Psi(s)-\Psi(0))^{1-\,\gamma} \left[ z_{1}(s)-w_{1}(s)\right]  ds \\
& \qquad + \tilde{L}(\Psi(t)-\Psi(0))^{1-\,\gamma}\int_{0}^{t} \Psi'(s)  (\Psi(t)-\Psi(s))^{\alpha-1} E_{\alpha,\,\alpha}\left(M (\Psi(t)-\Psi(s))^\alpha \right)\\
 & \qquad \times  (\Psi(s)-\Psi(0))^{\gamma-1} (\Psi(s)-\Psi(0))^{1-\,\gamma}\left[ z_{1}(s)-w_{1}(s)\right]  ds  \\
&\leq\frac{\tilde{L}\, r\,E_{\alpha,\,\gamma}\left(M (\Psi(T)-\Psi(0))^\alpha \right) }{\left[ 1-r\,E_{\alpha,\,1}\left(M (\Psi(T)-\Psi(0))^\alpha \right)\right]  }E_{\alpha,\,\alpha+1-\,\gamma}\left(M (\Psi(T)-\Psi(0))^\alpha \right)\left\Vert z_1-w_1\right\Vert _{C_{1-\,\gamma ;\Psi }\left( J,\,\R\right) }\\
&\qquad \times \Gamma(\alpha-\gamma+1)\left[\mathcal{I}_{0+}^{\alpha-\,\gamma+1;\,\Psi }   (\Psi(t)-\Psi(0))^{\gamma-1}  \right] _{t=T} \\
&+\tilde{L}\, (\Psi(T)-\Psi(0))^{1-\,\gamma}E_{\alpha,\,\alpha}\left(M (\Psi(T)-\Psi(0))^\alpha \right)\left\Vert z_1-w_1\right\Vert _{C_{1-\,\gamma ;\Psi }\left( J,\,\R\right)}\Gamma(\alpha)\,\mathcal{I}_{0+}^{\alpha;\,\Psi }   (\Psi(t)-\Psi(0))^{\gamma-1}\\
&\leq\frac{\tilde{L}\, r\,E_{\alpha,\,\gamma}\left(M (\Psi(T)-\Psi(0))^\alpha \right) }{\left[ 1-r\,E_{\alpha,\,1}\left(M (\Psi(T)-\Psi(0))^\alpha \right)\right]  }E_{\alpha,\,\alpha+1-\,\gamma}\left(M (\Psi(T)-\Psi(0))^\alpha \right)\left\Vert z_1-w_1\right\Vert _{C_{1-\,\gamma ;\Psi }\left( J,\,\R\right) }\\
&\qquad \times \Gamma(\alpha-\gamma+1)\frac{\Gamma(\gamma)}{\Gamma(\alpha+1)}(\Psi(T)-\Psi(0))^\alpha+\tilde{L}\, (\Psi(T)-\Psi(0))^{1-\,\gamma} E_{\alpha,\,\alpha}\left(M (\Psi(T)-\Psi(0))^\alpha \right)\\
&\qquad \times \left\Vert z_1-w_1\right\Vert _{C_{1-\,\gamma ;\Psi }\left( J,\,\R\right)}\Gamma(\alpha)\frac{\Gamma(\gamma)}{\Gamma(\alpha+\gamma)}(\Psi(T)-\Psi(0))^{\alpha+\gamma-1}\\
&=\tilde{L}\,(\Psi(T)-\Psi(0))^\alpha\left\Vert z_1-w_1\right\Vert _{C_{1-\,\gamma ;\Psi }\left( J,\,\R\right)}\\
&\qquad \times \left\lbrace  B(\alpha-\gamma+1,\gamma) \frac{ r\,E_{\alpha,\,\gamma}\left(M (\Psi(T)-\Psi(0))^\alpha \right) }{1-r\,E_{\alpha,\,1}\left(M (\Psi(T)-\Psi(0))^\alpha \right) }E_{\alpha,\,\alpha+1-\,\gamma}\left(M (\Psi(T)-\Psi(0))^\alpha \right)\right.\\
&\qquad\left.+B(\alpha,\gamma)E_{\alpha,\,\alpha}\left(M (\Psi(T)-\Psi(0))^\alpha \right)\right\rbrace .
\end{align*}
Therefore,
\begin{align*}
\left\Vert \mathcal{A}z_1-\mathcal{A}w_1\right\Vert _{C_{1-\,\gamma ;\Psi }\left( J,\,\R\right)}
&\leq \tilde{L}\,(\Psi(T)-\Psi(0))^\alpha\left\Vert z_1-w_1\right\Vert _{C_{1-\,\gamma ;\Psi }\left( J,\,\R\right)}\Omega.
\end{align*}
Using \eqref{rho}, above inequality reduces to
$$
\left\Vert \mathcal{A}z_1-\mathcal{A}w_1\right\Vert _{C_{1-\,\gamma ;\Psi }\left( J,\,\R\right)}\leq\varrho\left\Vert z_1-w_1\right\Vert_{C_{1-\,\gamma ;\Psi }\left( J,\,\R\right)},~w_0\preceq w_1\preceq z_1\preceq z_0.
$$
Consider the sequences  $\{z_n\}$ and $\{w_n\}$ defined in Theorem \ref{thm541}. Then $z_n=\mathcal{A}z_{n-1}$ and $w_n=\mathcal{A}w_{n-1} (n=1, 2, 3,\cdots)$. By repeated application of the above inequality, we obtain
\begin{align}\label{er6}
\left\Vert z_n -w_n\right\Vert _{C_{1-\,\gamma ;\Psi }\left( J,\,\R\right)}
&=\left\Vert \mathcal{A}z_{n-1}-\mathcal{A}w_{n-1}\right\Vert _{C_{1-\,\gamma ;\Psi }\left( J,\,\R\right)}\nonumber\\
&\leq\varrho\left\Vert z_{n-1}-w_{n-1}\right\Vert _{C_{1-\,\gamma ;\Psi }\left( J,\,\R\right)}=\varrho\left\Vert \mathcal{A}z_{n-2}-\mathcal{A}w_{n-2}\right\Vert _{C_{1-\,\gamma ;\Psi }\left( J,\,\R\right)}\nonumber\\
&\leq \varrho^2\left\Vert z_{n-2}-w_{n-2}\right\Vert _{C_{1-\,\gamma ;\Psi }\left( J,\,\R\right)}=\varrho^2\left\Vert \mathcal{A}z_{n-3}-\mathcal{A}w_{n-3}\right\Vert _{C_{1-\,\gamma ;\Psi }\left( J,\,\R\right)}\nonumber\\
&\leq\cdots\leq \varrho^n\left\Vert z_{0}-w_{0}\right\Vert _{C_{1-\,\gamma ;\Psi }\left( J,\,\R\right)}. 
\end{align}
Using the condition on  $\tilde{L}$, we obtain $0\leq\varrho<1$. This implies that $\varrho^n\rightarrow0$ as $n\rightarrow\infty$. Therefore, from the inequality \eqref{er6}, it follows  that 
$$\lim\limits_{n\rightarrow\infty}\left\Vert z_n -w_n\right\Vert _{C_{1-\,\gamma ;\Psi }\left( J,\,\R\right)}\rightarrow0.
$$ 
By applying Theorem \ref{thm541}, there exists minimal solution $w^*$ and maximal solution $z^*$ in $[w_0, z_0]$
such that 
\begin{equation}\label{er1}
\mathcal{A}w^*=w^* ~~\text{and}~~\mathcal{A}z^*=z^*.
\end{equation}
Further, 
\begin{equation}\label{er2}
\lim\limits_{n\rightarrow\infty}\left\Vert w_n -w^*\right\Vert _{C_{1-\,\gamma ;\Psi }\left( J,\,\R\right)}=0
\,~\text{and}~
\lim\limits_{n\rightarrow\infty}\left\Vert z_n -z^*\right\Vert _{C_{1-\,\gamma ;\Psi }\left( J,\,\R\right)}=0.
\end{equation}
Using the equations in \eqref{er2} and the continuity of norm, we have
$$
0=\lim\limits_{n \rightarrow \infty}\left\Vert z_n -w_n\right\Vert _{C_{1-\,\gamma ;\Psi }\left( J,\,\R\right)}=\left\Vert z^* -w^*\right\Vert _{C_{1-\,\gamma ;\Psi }\left( J,\,\R\right)}.
$$
This gives 
\begin{equation}\label{er3}
z^*=w^*:=y^*~\text{ in}~C_{1-\,\gamma ;\, \Psi }\left( J,\,\R\right).
\end{equation}
From equations \eqref{er1} and \eqref{er3}, we have
$$
\mathcal{A}y^*=y^*.
$$
Thus, we have  a unique $y^*\in\left[ w_{0},\,z_{0}\right] $ such that 
\begin{equation}\label{er7}
\lim\limits_{n\rightarrow\infty}\left\Vert w_n -y^*\right\Vert _{C_{1-\,\gamma ;\Psi }\left( J,\,\R\right)}=0
\,~\text{and}~
\lim\limits_{n\rightarrow\infty}\left\Vert z_n -y^*\right\Vert _{C_{1-\,\gamma ;\Psi }\left( J,\,\R\right)}=0.
\end{equation}
 Since $\{w_n\}_{n=1}^\infty\subseteq C_{1-\,\gamma ;\, \Psi }\left( J,\,\R\right)$ is increasing bounded sequence  and $\{z_n\}_{n=1}^\infty\subseteq C_{1-\,\gamma ;\, \Psi }\left( J,\,\R\right)$ is decreasing bounded sequence, from \eqref{er7} it follows that
\begin{equation}\label{er4}
w_n\preceq y^*\preceq z_n~\text{ in}~ C_{1-\,\gamma ;\Psi }\left(  J, \R\right).
\end{equation} 
 For each $y_0\in \left[ w_{0},\,z_{0}\right]$, consider the iterative sequence $y_n=\mathcal{A}y_{n-1}$. Then using the increasing nature of an operator $\mathcal{A}$ and the definitions of $w_n$ and $z_n$, we obtain
 \begin{equation}\label{er5}
 w_n\preceq y_n\preceq z_n~\text{ in} ~ C_{1-\,\gamma ;\Psi }\left(  J, \R\right).
\end{equation} 
 Using the inequalities \eqref{er6}, \eqref{er4} and  \eqref{er5}, for each $t\in J$, we have
\begin{align*}
\left( \left(\Psi(t)-\Psi(0)\right)^{1-\,\gamma}\left( y_n(t) -y^*(t)\right)\right) &\leq \left( \left(\Psi(t)-\Psi(0)\right)^{1-\,\gamma}\left( z_n(t) -w_n(t)\right)\right)\\
&\leq \left| \left( \left(\Psi(t)-\Psi(0)\right)^{1-\,\gamma}\left( z_n(t) -w_n(t)\right)\right)\right| \\
&\leq\left\Vert z_n -w_n\right\Vert _{C_{1-\,\gamma ;\Psi }\left( J,\,\R\right)}\\
&\leq \varrho^n\left\Vert z_{0}-w_{0}\right\Vert _{C_{1-\,\gamma ;\Psi }\left( J,\,\R\right)}.
\end{align*}
Therefore,
 \begin{equation}\label{er8}
\left\Vert y_n -y^*\right\Vert _{C_{1-\,\gamma ;\Psi }\left( J,\,\R\right)}\leq \varrho^n\left\Vert z_{0}-w_{0}\right\Vert _{C_{1-\,\gamma ;\Psi }\left( J,\,\R\right)}.
\end{equation} 
From the above inequality it follows that 
$$
\lim\limits_{n \rightarrow \infty}\left\Vert y_n -y^*\right\Vert _{C_{1-\,\gamma ;\Psi }\left( J,\,\R\right)}=0.
$$
Observe that the inequality \eqref{er8} gives the error bound with respect to the $\left\Vert \cdot\right\Vert _{C_{1-\,\gamma ;\Psi }\left( J,\,\R\right)}$ between approximation $y_n$ and the exact solution $y^*$ of the nonlinear $\Psi$-Hilfer  BVP \eqref{eq511}-\eqref{eq512}.
\end{proof}

\section{Example}
We consider a specific case of the problem \eqref{eq511}-\eqref{eq512} to illustrate the main results that we acquired. 
\begin{ex}
Consider the following   BVP for the nonlinear Caputo FDEs
\begin{align}
& ^C \mathcal{D}^{\frac{1}{2}}_{0 +}y(t)=\frac{\sqrt{\pi}}{10}-\frac{\sqrt{t}+1}{25}+\frac{1}{25}\sin\left( \frac{\sqrt{t}+1}{5}\right) +\frac{1}{25}\left( 5 y(t)-\sin\left(y(t)\right) \right) , ~t \in  (0,\,1],  ~\label{eq511a}\\
& y(0)= \frac{1}{2}\,y(1).\label{eq512b}
\end{align}
\end{ex}
One can verify that $y^*(t)=\frac{\sqrt{t}+1}{5},  ~t \in  [0,\,1]$  is an exact solution of the  problem \eqref{eq511a}-\eqref{eq512b}.
Comparing the above problem with the nonlinear $\Psi$-Hilfer  BVP  \eqref{eq511}-\eqref{eq512}, we obtain
\begin{equation}\label{6.3}
\alpha=\frac{1}{2}, \beta=1, \gamma=\alpha+\beta(1-\alpha)=1, M=0, r=\frac{1}{2}, T=1 ~\text{and}~ \Psi(t)=t, t\in[0,\,1].
\end{equation}
In this case the weighted space $C_{1-\,\gamma ;\Psi }\left( [0,\,1],\,\R\right)$ reduces to the space of continuous functions $C\left( [0,\,1],\,\R\right)$ endowed with the supremum norm.

Define the function $f: [0,\,1]\times\R\rightarrow\R$ by
\begin{equation}\label{6.4}
f(t,y)=\frac{\sqrt{\pi}}{10}-\frac{\sqrt{t}+1}{25}+\frac{1}{25}\sin\left( \frac{\sqrt{t}+1}{5}\right) +\frac{1}{25}\left( 5 y-\sin y \right).
\end{equation}
(1) For any $k\in\R$ consider the function $\tilde{f}:\R\rightarrow\R$ defined by 
$$
\tilde{f}(y)=k+\frac{1}{25}\left( 5 y-\sin y \right), \,y\in \R.
$$
Then, 
$
\tilde{f}'(y)=\frac{1}{25}\left( 5-\cos y \right)\geq 0,~ \text{for all }~y\in \R.
$ Therefore the function $\tilde{f}$ is increasing on $\R$ \,for any real $k$.  This implies the function $f: [0,\,1]\times\R\rightarrow\R$ defined in \eqref{6.4} is increasing in $y\in \R$ for each $t\in[0, 1]$.\\
(2) Let any $y_1, y_2\in\R$ with $y_1\leq y_2$. Then
\begin{align}\label{6.5}
f(t, y_2)-f(t, y_1)
&=\frac{1}{25}\left\lbrace 5(y_2-y_1)-\left( \sin y_2 - \sin y_1\right)\right\rbrace \nonumber \\
&\leq\frac{1}{25}\left\lbrace 5|y_2-y_1|+| \sin y_2 - \sin y_1|\right\rbrace.
\end{align}
Since $\sin y$ is continuous and differentiable on the interval $[y_1, y_2]$, applying  mean value theorem, there exists $\tilde{y}\in[y_1, y_2]$ such that
$$
\frac{\sin y_2-\sin y_1}{ y_2-y_1}=\cos \tilde{y}.
$$
This implies $| \sin y_2 - \sin y_1|\leq |y_2-y_1|$. Therefore, the inequality \eqref{6.5} reduces to 
\begin{align}\label{6.6}
f(t, y_2)-f(t, y_1)
&\leq\frac{1}{25}\left\lbrace 5|y_2-y_1|+|  y_2 -  y_1|\right\rbrace  \nonumber \\
&\leq \frac{6}{25}\left( y_2-y_1 \right), ~\text{for any} ~y_1, y_2\in\R ~\text{with} ~ y_1\leq y_2.
\end{align}
 Comparing the above inequality with \eqref{eq537}, we have $\tilde{L}=\frac{6}{25}$.\\
(3) Next, we prove that $
\tilde{L}\in\left[ 0,\, \Omega  ^{-1}\right)
$ where $\Omega$ is defied in  \eqref{er9}. Using the values given in   \eqref{6.3}, the equation \eqref{er9} reduces to
$$
\Omega=  B\left( \frac{1}{2}-1+1,1\right)   \frac{\frac{1}{2}\,E_{\frac{1}{2},\,1}\left( 0\right) E_{\frac{1}{2},\,\frac{1}{2}+1-\,1}\left( 0\right)}{\left[ 1-\frac{1}{2}\, E_{\frac{1}{2},\,1}\left( 0\right)\right] } + B\left( \frac{1}{2},1\right) E_{\frac{1}{2},\,\frac{1}{2}}\left( 0\right).
$$
Using Lemma \ref{lem28}, we obtain
\begin{equation}\label{6.5a}
E_{n_1,\,n_2}(0)=\frac{1}{\Gamma\left( n_2\right)}, ~n_1,~n_2>0.
\end{equation} 
Therefore
$$
\Omega=  \frac{\Gamma\left( \frac{1}{2}\right)\Gamma\left( 1\right)}{\Gamma\left( \frac{1}{2}+1\right)} \frac{\frac{1}{2}\,\frac{1}{\Gamma\left(1\right)}\frac{1}{\Gamma\left( \frac{1}{2}\right)}}{\left[ 1-\frac{1}{2}\, \frac{1}{\Gamma\left( 1\right)} \right]} +\frac{\Gamma\left( \frac{1}{2}\right)\Gamma\left( 1\right)}{\Gamma\left( \frac{1}{2}+1\right)}  \frac{1}{\Gamma\left( \frac{1}{2}\right)}=\frac{4}{\sqrt{\pi}}.
$$
Note that, for $\tilde{L}=\frac{6}{25} ~\mbox{and}~ \Omega=\frac{4}{\sqrt{\pi}}, $ we have $\tilde{L}\in\left[ 0,\, \Omega  ^{-1}\right)$.\\
(4) Define $z_0(t)=\sqrt{t}+1 ,\, t\in[0,1]$ and  $w_0(t)=\frac{-(\sqrt{t}+1)}{6},\, t\in[0,1]$. Then,
\begin{align*}
h_1(t):&=\, ^C \mathcal{D}^{\frac{1}{2}}_{0 +}z_0(t)=\frac{\sqrt{\pi}}{2},\\
h_2(t):&= f(t,z_0(t))\\
&=\frac{\sqrt{\pi}}{10}-\frac{\sqrt{t}+1}{25}+\frac{1}{25}\sin\left( \frac{\sqrt{t}+1}{5}\right) +\frac{1}{25}\left( 5 \left( \sqrt{t}+1\right) -\sin \left( \sqrt{t}+1\right)  \right),\\
h_3(t):&=\, ^C \mathcal{D}^{\frac{1}{2}}_{0 +}w_0(t)=\frac{-\sqrt{\pi}}{12},\\
h_4(t) :&= f(t, w_0(t))\\
&=\frac{\sqrt{\pi}}{10}-\frac{\sqrt{t}+1}{25}+\frac{1}{25}\sin\left( \frac{\sqrt{t}+1}{5}\right) +\frac{1}{25}\left( 5 \left( \frac{-(\sqrt{t}+1)}{6}\right) -\sin \left( \frac{-(\sqrt{t}+1)}{6}\right)  \right).
\end{align*}
\begin{figure}[!htb]
 	\begin{minipage}{0.5\textwidth}
 	\centering
 		\includegraphics[width=0.9\textwidth]{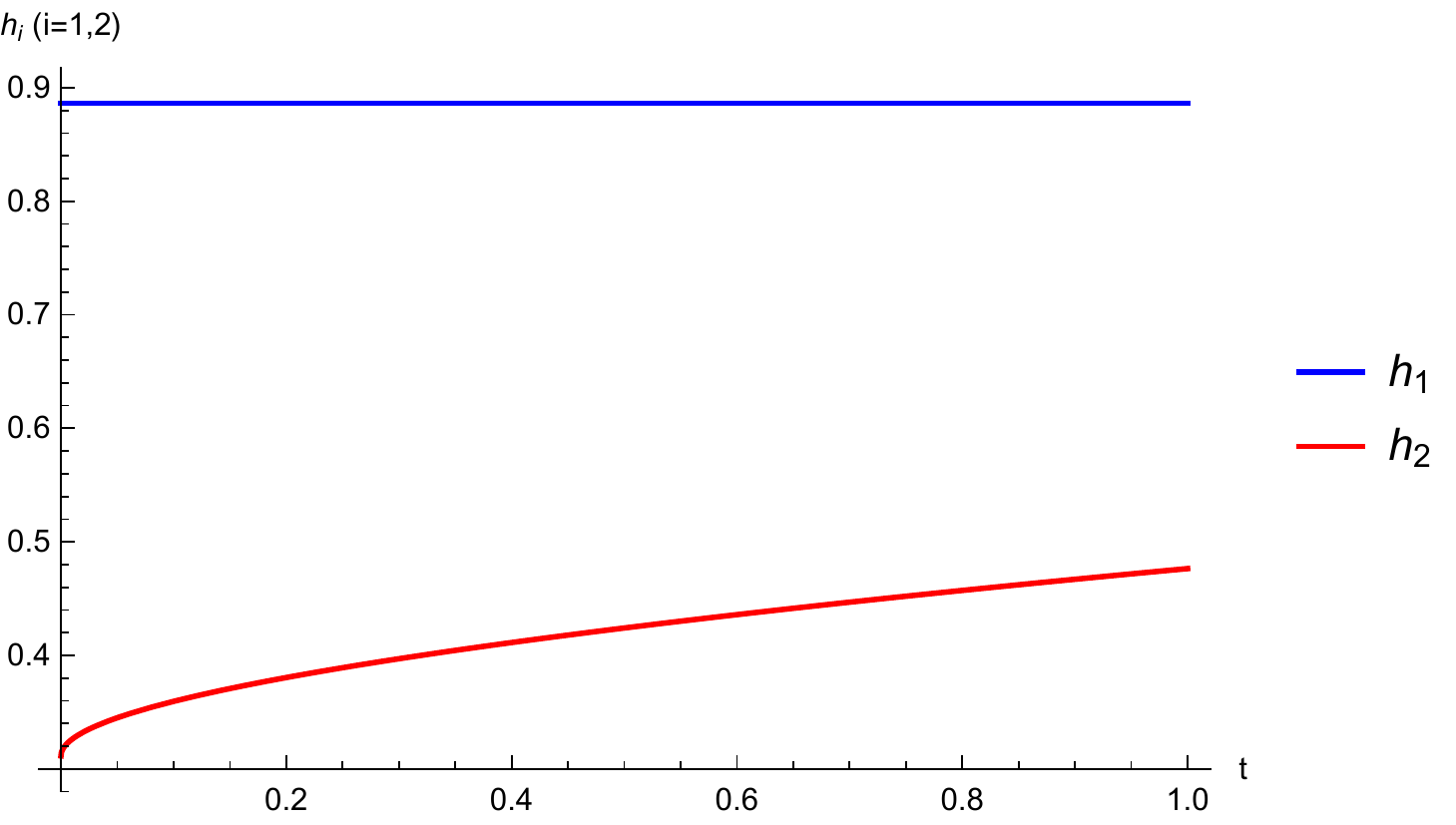}
 	\caption{$z_0$ is an upper solution}
 	\end{minipage}\hfill
	\begin{minipage}{0.5\textwidth}
			\centering
		\includegraphics[width=0.9\textwidth]{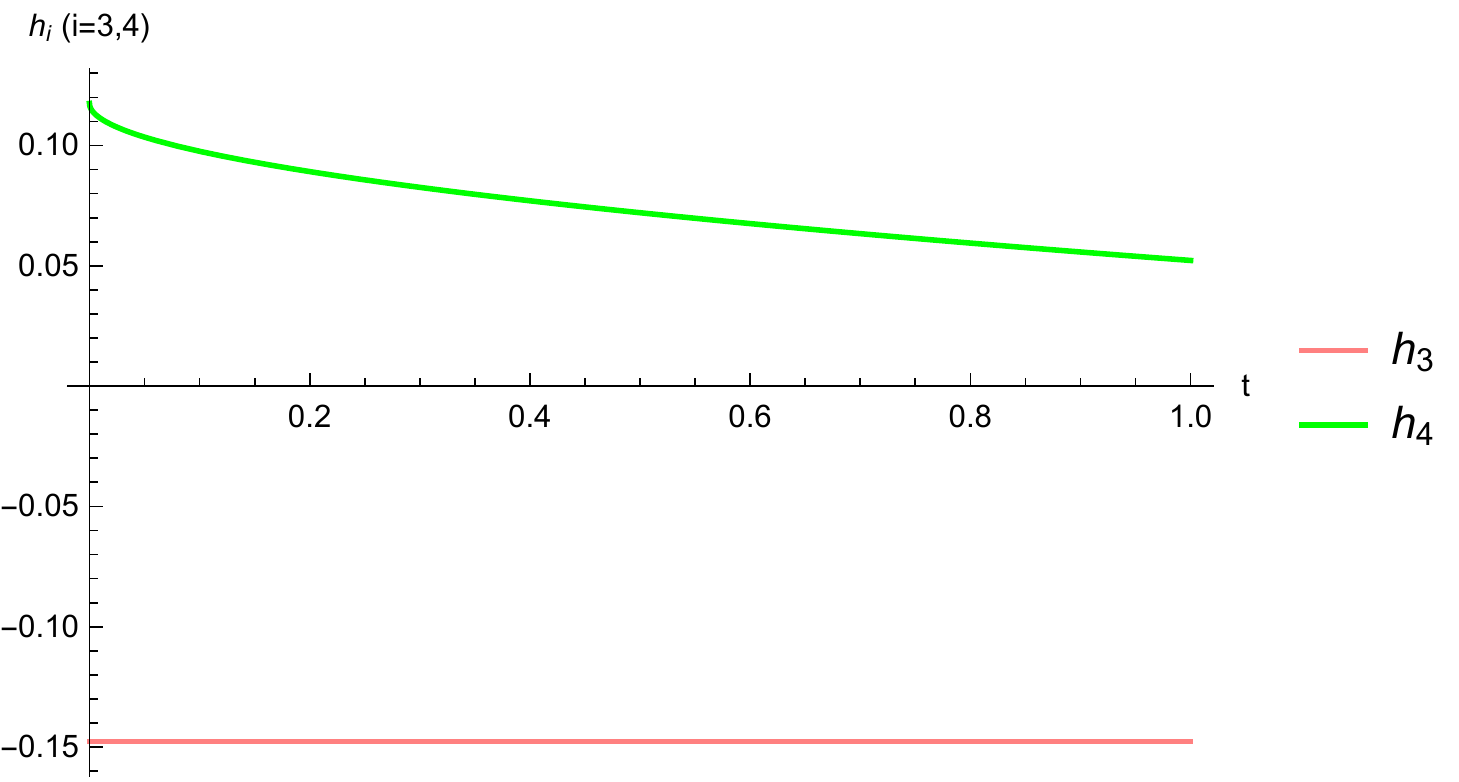}
		\caption{$w_0$ is a lower solution}
	\end{minipage}
\end{figure}
\newline
From Figure 1,  it follows that 
$$
^C \mathcal{D}^{\frac{1}{2}}_{0 +}z_0(t)\geq f(t, z_0(t)),\, t\in[0,1].
$$
Further, using the definition of $z_0$, we have $z_0(0)=\frac{1}{2}z_0(1)$  and hence $b_{z_0}(t)=0$. This proves that $z_0(t)=\sqrt{t}+1, \, t\in[0,1]$ is an upper solution  of the problem \eqref{eq511a}-\eqref{eq512b}. Again, from Figure 2,  it follows that

$$
^C \mathcal{D}^{\frac{1}{2}}_{0 +}w_0(t)\leq f(t, w_0(t)),\, t\in[0,1].
$$ 
Further, by definition of  $w_0$, we have
$w_0(0)=\frac{1}{2}w_0(1)$ and $a_{w_0}(t)=0$. This implies $w_0(t)=-\frac{\sqrt{t}+1}{6}, \, t\in[0,1]$ is a lower solution of the problem \eqref{eq511a}-\eqref{eq512b}.  Since all the assumptions of Theorem \ref{th52} are satisfied, it guarantee the existence of  a unique solution $y^*$ in $[w_0, z_0]$   of the problem \eqref{eq511a}-\eqref{eq512b}.  Indeed, 
$$
-\frac{\sqrt{t}+1}{6}\leq\frac{\sqrt{t}+1}{5}\leq\sqrt{t}+1, ~\text{for all }~ t\in [0,1],
$$
implies 
$$
w_0(t)\leq y^*(t)\leq z_0(t),~ t\in [0,1].
$$
This implies $y^*\in[w_0, z_0] $, where $y^*,~w_0$ and $z_0$ defined above, respectively are the exact, lower and upper solutions of the problem \eqref{eq511a}-\eqref{eq512b}.\\
(5) Using  \eqref{6.3} and \eqref{6.5a}, the sequence $\{y_n\}_{n=1}^\infty$ defined in the Theorem \ref{th52} reduces to 
$$
y_n(t)=\frac{1}{\sqrt{\pi}}\int_{0}^{1} \left( 1-s\right) ^{\frac{-1}{2}} f(s, y_{n-1}(s))ds+\frac{1}{\sqrt{\pi}}\int_{0}^{t} \left( t-s\right) ^{\frac{-1}{2}}f(s, y_{n-1}(s))ds
$$
where $y_0\in[w_0, z_0]$. By Theorem \ref{th52},
 \begin{equation}\label{6.10}
\left\Vert y_n -y^*\right\Vert _{C\left( [0,1],\,\R\right)}\leq \varrho^n\left\Vert z_{0}-w_{0}\right\Vert _{C\left( [0,1],\,\R\right)}.
\end{equation} 
Using \eqref{6.3} and the values of $\Omega$ and $\tilde{L}$ determined above, from \eqref{rho}, we obtain 
$$
\rho=\Omega\tilde{L}=\frac{4}{\sqrt{\pi}}\frac{6}{25}=\frac{24}{25\sqrt{\pi}}.
$$
Thus, from the inequality \eqref{6.10}, we have
\begin{align*}
\left\Vert y_n -y^*\right\Vert _{C\left( [0,1],\,\R\right)}
&\leq \left( \frac{24}{25\sqrt{\pi}}\right) ^n \underset{t\in[0,1]}{\sup}\left| z_0(t)-w_0(t)\right| \\
&= \left( \frac{24}{25\sqrt{\pi}}\right) ^n \underset{t\in[0,1]}{\sup}\left| \sqrt{t}+1-\left( -\frac{\sqrt{t}+1}{6}\right) \right| \\
&= \left( \frac{24}{25\sqrt{\pi}}\right) ^n \frac{7}{6}\,\underset{t\in[0,1]}{\sup}\left| \sqrt{t}+1\right|.
\end{align*}
Therefore,
\begin{equation}\label{6.11}
\left\Vert y_n -y^*\right\Vert _{C\left( [0,1],\,\R\right)}\leq \frac{7}{3} \left( \frac{24}{25\sqrt{\pi}}\right) ^n.
\end{equation}
The inequality \eqref{6.11} gives the error between $n^{th}$ approximation $y_n$  and exact solution $y^*$  of the problem \eqref{eq511a}-\eqref{eq512b}. Since $\frac{24}{25\sqrt{\pi}}<1$,  it follows that $y_n \rightarrow y^*$ in $C\left( [0,1],\,\R\right)$ as $n\rightarrow\infty$.


\section*{Acknowledgement}
The second author  acknowledges the Science and Engineering Research Board (SERB), New Delhi, India for the Research Grant (Ref: File no. EEQ/2018/000407).

\end{document}